\DeclareMathOperator{\lcm}{lcm}
\DeclareMathOperator{\ord}{ord}
\DeclareMathOperator{\rad}{rad}
\DeclareMathOperator{\Orb}{Orb}
\DeclareMathOperator{\dist}{dist}
\newcommand{\N}{\mathbb{N}}
\newcommand{\Z}{\mathbb{Z}}
\newcommand{\Q}{\mathbb{Q}}
\renewcommand{\subset}{\subseteq}
\renewcommand{\supset}{\supseteq}
\newcommand{\lan}{\langle}
\newcommand{\ran}{\rangle}
\numberwithin{equation}{section}
\newtheorem{thm}{Theorem} [section]
\newtheorem{lem}[thm]{Lemma}
\newtheorem{cor}[thm]{Corollary}
\newtheorem{rem}[thm]{Remark}
\theoremstyle{definition}
\begin{document}
	
\title{Rational numbers in $\times b$-invariant sets}

\author{Bing Li} 
\email{scbingli@scut.edu.cn}
\author{Ruofan Li} 
\email{liruofan@scut.edu.cn}
\author{Yufeng Wu}
\email{yufengwu@scut.edu.cn}

\address{School of Mathematics, South China University of Technology, Guangzhou, China, 510641}

\keywords{Cantor sets, rational numbers, $T_b$-invariant sets}

 \thanks{2010 {\it Mathematics Subject Classification}: 11A63, 37E05}

\begin{abstract}
Let $b \geq 2$ be an integer and $S$ be a finite non-empty set of primes not containing divisors of $b$. For any non-dense set $A \subset [0,1)$ such that $A \cap \Q$ is invariant under $\times b$ operation, we prove the finiteness of rational numbers in $A$ whose denominators can only be divided by primes in $S$. A quantitative result on the largest prime divisors of the denominators of rational numbers in $A$ is also obtained. 
\end{abstract}

\maketitle

\section{Introduction}
Let $C$ be the classical middle-third Cantor set, which consists of real numbers in $[0,1]$ whose ternary expansions do not contain digit $1$. In 1984, Mahler \cite{Mahler84} asked how close can irrational elements in $C$ be approximated by rational numbers in $C$. A related question is what are the rational numbers in $C$. For any $n \geq 1$, we know that there are exactly $2^{n+1}$ rational numbers of the form $\frac{a}{3^n}$ in $C$ with $a \in \Z$. One may ask what happens if the denominator is a $d$-power for some $d \geq 2$ instead of a $3$-power. For $d=2$, Wall \cite{Wall83} proved that $\frac{1}{4}$ and $\frac{3}{4}$ are the only dyadic rationals in $C$. More generally, let $S$ be a finite set of primes,  then the set of \textit{$S$-integers} $\Z_{S}$ is defined to be the set of rational numbers whose denominators can only be divided by primes in $S$. Equivalently, $$\Z_{S} = \{\alpha \in \Q \colon v_{p}(\alpha) < 0 \text{ implies } p \in S\},$$ where $v_{p}(\alpha)$ is the unique integer such that $\alpha = p^{v_{p}(\alpha)} \frac{m}{n}$ for some $m,n \in \Z$ coprime with $p$. One may wonder what does the set $\Z_{S} \cap C$ look like. When $S = \{2,5\}$, it was proved by Wall \cite{Wall90} that $\Z_{\{2,5\}} \cap C$ consists of exactly $14$ elements. Later, Nagy \cite{Nagy01} showed that, if $S = \{p\}$ for some prime $p > 3$, then $C$  contains only finitely many $S$-integers. Recently, based on a heuristic argument as well as numerical evidence, Rahm, Solomon, Trauthwein and Weiss \cite{RSTW20} formulated an asymptotic for the number $N^*(T)$ of reduced rational numbers in $C$ with denominators bounded by $T$. 

The distribution of rational numbers could also be studied  for generalized Cantor sets. Let $b \geq 2$ be an integer and $\mathcal{D}$ be a non-empty subset of $\{0,1,\ldots,b-1\}$, the generalized Cantor set $C(b,\mathcal{D})$ is the set of real numbers in $[0,1]$ whose base $b$ expansions only consist of digits in $\mathcal{D}$. Extending Nagy's work, Bloshchitsyn \cite{Bloshchitsyn15} proved that for any integer $b \geq 3$, $\mathcal{D}$ with cardinality  $b-1$ and prime $p > b^2$, the set $\Z_{\{p\}} \cap C(b,\mathcal{D})$ is finite. A very recent work of Schleischitz \cite[Corollary 4.4]{Schleischitz21} showed that $C(b,\mathcal{D})$  contains only finitely many $S$-integers if no element of $S$ divides $b$ and $\mathcal{D}$ has cardinality at most $b-1$. Shparlinski \cite{Shparlinski21} proved a quantitative strengthen of Schleischitz's result. To state Shparlinski's result, we first introduce some notations. For an integer $d \geq 2$, denote the {\em largest prime divisor} of $d$ by $P(d)$ and define the {\em radical} of $d$ by $$\rad(d) = \prod_{p \mid d: \ p\text{ prime}} p.$$

\begin{thm}[\cite{Shparlinski21}] \label{thm:Shp}
Let  $b \geq 2$ be an integer and $\mathcal{D} \subset \{0,1,\ldots,b-1\}$ be a non-empty set of cardinality at most $b-1$. Then there exists a constant $c_{b}>0$, depending only on $b$, such that for any rational number $\frac{a}{d}$ in $C(b,\mathcal{D})$ with $\gcd(ab,d) = 1$, we have 
\begin{align*}
\rad(d) \geq c_{b} \log d \quad \text{ and } \quad P(d) \geq c_b \sqrt{\log d \log \log d}.
\end{align*}
\end{thm}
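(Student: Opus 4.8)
The plan is to translate the digit restriction into a statement about a multiplicative orbit in $\Z/d\Z$, and then to bound the prime-power part of $d$ using the subgroup structure of $(\Z/d\Z)^*$. First I would record the reduction: since $\gcd(ab,d)=1$, the fraction $\frac{a}{d}$ has a purely periodic base-$b$ expansion whose period length equals $\ell:=\ord_d(b)$, so in particular $d\mid b^\ell-1$. Writing $x=\frac{a}{d}$, the $k$-th base-$b$ digit of $x$ is the leading digit of $\{b^{k-1}x\}=\frac{ab^{k-1}\bmod d}{d}$. Because $|\mathcal D|\le b-1$, some value $j\in\{0,\dots,b-1\}$ never occurs as a digit, which is exactly the statement that the multiplicative orbit $O=\{ab^{i}\bmod d:0\le i<\ell\}$ avoids the arc $I=[jd/b,(j+1)d/b)\cap\Z$ of length at least $d/b-1$. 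Here $O=aH$ is a coset of the cyclic subgroup $H=\lan b\ran\le(\Z/d\Z)^*$, and $|O|=\ell$.

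The core of the argument is the claim that the exponent of each prime in $d$ is bounded: if $p^{e}\,\|\,d$, then
\[
e\le \nu_p+\log_p(2b),\qquad\text{where } p^{\nu_p}\,\|\,b^{\ord_p(b)}-1.
\]
For a prime power $d=p^{e}$ (with $p$ odd, the case $p=2$ being analogous) this is clean: the group $(\Z/p^{e}\Z)^*$ is cyclic, and by lifting the exponent the $p$-part of $H=\lan b\ran$ is the subgroup $U=1+p^{\nu_p}\Z/p^{e}\Z$ of order $p^{e-\nu_p}$ whenever $e\ge\nu_p$. Then $aU\subset O$ is exactly the set of residues congruent to $a\pmod{p^{\nu_p}}$, an arithmetic progression of common difference $p^{\nu_p}$. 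Any interval of length $\ge p^{\nu_p}$ meets such a progression, so if $d/b-1\ge p^{\nu_p}$ the orbit would meet $I$, a contradiction; hence $p^{e}<2bp^{\nu_p}$, which is the claimed bound.

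I expect the main obstacle to be upgrading this to composite $d$: the subgroup of $H$ that is ``supported at $p$'' can be strictly smaller than $U$, because the order of $b$ modulo the other prime powers may share factors with $p$, so the progression sitting inside $O$ has a larger common difference. Handling this Chinese-remainder interaction carefully—showing that $O$ nonetheless contains a progression dense enough to hit $I$—is the crux, and the extra loss should be absorbed into the constants.

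Granting the claim, both inequalities follow by summation. Since $p^{\nu_p}\mid b^{\ord_p(b)}-1<b^{p}$, we have $\nu_p\log p<p\log b$, whence
\[
\log d=\sum_{p\mid d}e_p\log p\le\sum_{p\mid d}\nu_p\log p+\omega(d)\log(2b)\le \Big(\sum_{p\mid d}p\Big)\log b+\omega(d)\log(2b).
\]
Using $\sum_{p\mid d}p\le\rad(d)$ and $\omega(d)\le\rad(d)$ gives $\log d\le\rad(d)\log(2b^{2})$, that is $\rad(d)\ge c_b\log d$ with $c_b=1/\log(2b^{2})$. For the second bound, let $P=P(d)$; every prime factor of $d$ is at most $P$, so by the prime number theorem $\sum_{p\mid d}\nu_p\log p\le\log b\sum_{p\le P}p\ll\log b\cdot P^{2}/\log P$ and $\omega(d)\le\pi(P)\ll P/\log P$, giving $\log d\ll_b P^{2}/\log P$. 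Since this already forces $\log P\gg\log\log d$, reinserting yields $P^{2}\gg_b\log d\,\log\log d$, i.e. $P(d)\ge c_b\sqrt{\log d\,\log\log d}$, completing the plan.
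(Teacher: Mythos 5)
Your reduction (missing digit $\Leftrightarrow$ the coset $O=a\langle \overline b\rangle\subset(\Z/d\Z)^{\times}$ avoids an integer interval of length $\geq d/b-1$) and your prime-power analysis are both correct: for $d=p^{e}$, $p$ odd, the $p$-Sylow of $\langle\overline b\rangle$ is $1+p^{\nu_p}\Z/p^{e}\Z$, so $O$ contains a full progression of difference $p^{\nu_p}$ and a missing digit forces $p^{e}<2bp^{\nu_p}$. But note first that the paper does not prove this statement at all — it quotes it from Shparlinski — so the comparison must be with the paper's own machinery (Lemma \ref{lem:order_d}, Theorem \ref{thm:A_1A_2}) for its analogous Theorem \ref{thm:P}. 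Measured against that, the step you flag as "the crux" is a genuine gap, and it is worse than a loss of constants: the per-prime bound $e_p\leq\nu_p+\log_p(2b)$ is the wrong statement for composite $d$. The progression you need inside $O$ is $aU$, where $U$ is the subgroup of $H=\langle\overline b\rangle$ of elements $\equiv 1$ modulo $d/p^{e_p}$, and one computes
\begin{equation*}
v_{p}(\#U)\;=\;\max\Bigl(0,\;v_{p}(\ord(\overline b,p^{e_p}))-\max_{q\mid d,\,q\neq p}v_{p}(\ord(\overline b,q^{e_q}))\Bigr).
\end{equation*}
If some other prime $q\mid d$ has $\ord(\overline b,q^{e_q})$ divisible by a high power $p^{K}$ (e.g. $q\equiv 1\pmod{p^{K}}$ with $\ord(\overline b,q)=p^{K}$), then this exponent is $0$ even when $e_p$ is enormous: raising $e_p$ up to about $K$ does not increase $\ord(\overline b,d)$ at all, the orbit gains no new points, and no progression of difference $\approx p^{\nu_p}$ sits inside $O$. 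So the correct per-prime bound must carry a cross term, $e_p\leq N_p+\log_p(2b)$ with
\begin{equation*}
N_{p}=\max_{q\mid d}\bigl\{n_{p}-v_{p}(\ord(\overline b,p^{n_{p}}))+v_{p}(\ord(\overline b,q^{n_{q}}))\bigr\},
\end{equation*}
which is exactly the content of the paper's Lemma \ref{lem:order_d} (the quantity \eqref{eqNp}) and Theorem \ref{thm:A_1A_2} (the orbit is precisely a union of progressions of gap $1/d_{0}$, $d_{0}=\prod_{e_p>N_p}p^{e_p-N_p}$). Proving that order formula is the technical heart of the matter, and it cannot be waved away.

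The good news is that once the claim is corrected, your two summations survive unchanged, because the cross term is harmless in aggregate: $v_{p}(\ord(\overline b,q^{n_q}))\log p\leq n_{q}\log q\ll q\log b\leq P\log b$ (this is the paper's Lemma \ref{lem:bounds_Np}), so $\log d\ll\sum_{p\mid d}(p+P)\log b+\omega(d)\log(2b)$. This still gives $\rad(d)\gg_{b}\log d$ (using $\omega(d)P\leq\rad(d)$, which holds since $\rad(d)\geq P\cdot 2^{\omega(d)-1}$) and $\log d\ll_{b}P^{2}/\log P$, whence $P(d)\gg_{b}\sqrt{\log d\,\log\log d}$ as you argued. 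So your outline is in fact the same route the paper takes to its Theorem \ref{thm:P}; what is missing is precisely the lemma that constitutes the paper's main technical work.
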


In this article, we investigate $S$-integers in $T_{b}$-invariant sets. For any integer $b \geq 2$, the transformation $T_{b} \colon [0,1) \to [0,1)$ is defined by $$T_b(x) = bx \pmod{1}.$$ 
We say that a set $A \subset [0,1)$ is \textit{$T_{b}$-invariant} if $T_{b}(A) \subset A$. Clearly all generalized Cantor sets $C(b,\mathcal{D})$ are $T_{b}$-invariant. Our finiteness result is as follows.

\begin{thm} \label{thm:finite}
Let $b \geq 2$ be an integer, $S$ be a non-empty finite set of primes not containing any prime divisor of $b$, and $A$ be a subset of $[0,1)$. If $A$ is not dense in $[0,1]$ and $T_{b}(A \cap \Q) \subset A$, then $A$ contains at most finitely many $S$-integers.
\end{thm}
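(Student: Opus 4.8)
The plan is to exploit that non-density forces a forbidden base-$b$ pattern which the whole $T_b$-orbit of every rational $S$-integer in $A$ must avoid, and then to show that for large denominators such avoidance is impossible because the orbit is forced to equidistribute. First I would fix an interval $(\alpha,\beta)\subset[0,1]$ disjoint from $A$ (which exists since $\overline A\neq[0,1]$) and choose $m$ with $b^{-m}<\beta-\alpha$, so that some length-$m$ cylinder $[u]=[\,r,\,r+b^{-m})$ with $r=\sum_{j=1}^m u_j b^{-j}$ lies inside $(\alpha,\beta)$; hence $A\cap[u]=\varnothing$. For any $S$-integer $\frac ad\in A$ in lowest terms we have $\gcd(b,d)=1$ because $S$ contains no prime divisor of $b$, so the base-$b$ expansion of $\frac ad$ is purely periodic of period $\ell=\ord_d(b)$. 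Since $T_b(A\cap\Q)\subset A$ gives $T_b^i(\frac ad)\in A$ for all $i$, and $T_b^i(\frac ad)=\frac{ab^i\bmod d}{d}$, none of these iterates lies in $[u]$; equivalently the coset $a\lan b\ran\subset(\Z/d\Z)^{\times}$ avoids the arc $J=[\,rd,\ rd+b^{-m}d)$ of length $b^{-m}d$ modulo $d$.

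The next step is a lower bound on the orbit length. Because $S$ is finite, a large $S$-smooth $d=\prod_{p\in S}p^{e_p}$ must have a large prime-power factor $p^{e_p}\ge d^{1/|S|}$, and by the lifting-the-exponent formula $\ord_{p^e}(b)=\ord_p(b)\,p^{e-f_p}$ for $e\ge f_p$, with $f_p$ bounded on the finite set $S$. Hence $\ell=\ord_d(b)\ge\ord_{p^{e_p}}(b)\ge c\,d^{1/|S|}$ for a constant $c=c(b,S)>0$, so $\lan b\ran$ is a multiplicative subgroup whose size grows polynomially in $d$.

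Finally I would derive a contradiction for large $d$ by showing that such a subgroup cannot have a coset avoiding an arc of length $\asymp d$. Writing $e_d(x)=\exp(2\pi i x/d)$ and expanding the indicator of $J$ in additive characters, the number of $i\in\{0,\dots,\ell-1\}$ with $ab^i\bmod d\in J$ equals $\frac{\ell|J|}{d}+O\bigl(\log d\cdot\max_{t\not\equiv0}\bigl|\sum_{i=0}^{\ell-1}e_d(t a b^i)\bigr|\bigr)$, and the main term is $\asymp\ell$; so it suffices to bound the incomplete exponential sum over $\lan b\ran$ by $o(\ell)$. When $d=p^e$ is a prime power this is comparatively soft: the inequality $\ord_{p^e}(b)\ge p^{e-f_p}$ forces $\lan b\ran$ to have bounded index in $(\Z/p^e\Z)^{\times}$, so its indicator is a combination of boundedly many Dirichlet characters and the Pólya--Vinogradov inequality finishes the job. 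The main obstacle is the composite case: for $d$ with several prime factors $\lan b\ran$ is a thin (though polynomially large) cyclic subgroup of $(\Z/d\Z)^{\times}$ of unbounded index, and the coupled sum $\sum_{i}\prod_{p}e_{p^{e_p}}(t_p b^i)$ no longer factorises, so controlling it requires genuine equidistribution estimates for multiplicative subgroups to a general modulus in the spirit of Bourgain--Konyagin and Cochrane--Zheng. Granting such a bound, every coset of $\lan b\ran$ meets $J$ once $d$ exceeds some $D_0=D_0(b,S)$, contradicting the avoidance established above; therefore every $S$-integer in $A$ has denominator at most $D_0$, and there are only finitely many such rationals in $[0,1)$.
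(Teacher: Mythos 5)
Your reduction of the problem to a coset-avoidance statement is fine, and your order lower bound $\ord(\overline{b},d)\geq c\,d^{1/\#S}$ is essentially the same lifting-the-exponent computation as Lemma \ref{lem:order_d}; but the analytic core of your argument has a genuine gap, in two respects. First, the step ``so it suffices to bound the incomplete exponential sum over $\lan\overline{b}\ran$ by $o(\ell)$'' is based on an error term that cannot work: $\max_{t\not\equiv 0}\bigl|\sum_{i=0}^{\ell-1}e_d(tab^i)\bigr|$ is in general \emph{not} $o(\ell)$, so your bound $O(\log d\cdot\max_t|\cdot|)$ swamps the main term. Concretely, take $b=6$, $S=\{5\}$, $d=5^e$ and $t=5^{e-1}$: since $6^i\equiv 1\pmod 5$, every term $e_d(tab^i)=e_5(a6^i)=e_5(a)$ is the same root of unity, and the sum has modulus exactly $\ell$. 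Frequencies $t$ for which $d/\gcd(t,d)$ is small see no cancellation at all; to handle them one must use the decay of the Fourier coefficient $\hat{1}_J(t)$ at such $t$ and organize the error according to $\gcd(t,d)$, which your bookkeeping does not do. (Note that in this example the orbit is trapped in the residue class $a\bmod 5$, so it genuinely fails to equidistribute additively, yet the theorem is still true --- evidence that approximate equidistribution is the wrong invariant here.) Second, and more seriously, for the large divisors of a genuinely composite $d$ you need a power-saving bound on $\sum_{x\in aH}e_{q}(tx)$ for the thin cyclic group $H=\lan\overline{b}\ran$ of unbounded index, and you explicitly leave this as a black box (``granting such a bound''). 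Estimates of this type do exist (Bourgain's exponential sum bounds for subgroups of $(\Z/q\Z)^{\times}$ with $q$ arbitrary, under lower bounds on all projections of $H$, which your order estimate would supply), but they rest on sum--product machinery and are far from self-contained; as written, the crux of your proof is an unproved deep theorem. Only your prime-power case is genuinely complete, and even there what works is the multiplicative-character count with P\'olya--Vinogradov, not the additive-Fourier reduction it is meant to feed into.

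The gap is also avoidable, because the orbit has an exact rigid structure that makes all equidistribution estimates unnecessary --- this is the idea your approach misses and the paper exploits. Writing $d=d_0d_1$ with $d_1=\prod_{p\in S}p^{\min\{e_p,N_p\}}$ bounded in terms of $b$ and $S$, and $d_0=\prod_{p\in S:\,e_p>N_p}p^{e_p-N_p}$, Lemma \ref{lem:order_d} gives the exact identity $\ord(\overline{b},d)=d_0\ord(\overline{b},d_1)$; then Theorem \ref{thm:A_1A_2} shows by a pure counting argument (the inclusion $A_1\subset A_2$ plus equality of cardinalities) that the orbit of $\frac{a}{d}$ is precisely $\bigl\{\frac{1}{d_0}T_b^i(\frac{a}{d_1})+\frac{j}{d_0}\bigr\}$, hence contains a full arithmetic progression of spacing $\frac{1}{d_0}\leq\frac{1}{d}\prod_{p\in S}p^{N_p}$. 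This makes the orbit $\varepsilon$-dense as soon as $d$ exceeds the explicit bound \eqref{eqdefD}, with no characters or exponential sums anywhere. If you wish to salvage your route, you would need to (i) redo the Fourier error divisor-by-divisor, and (ii) either state and verify the hypotheses of Bourgain's composite-modulus theorem precisely, or restrict to $\#S=1$, where your P\'olya--Vinogradov argument genuinely suffices.
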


\begin{rem} Note  that in Theorem \ref{thm:finite} we only require that $T_{b}(A \cap \Q) \subset A$, which is weaker than that $A$ is $T_b$-invariant. \end{rem} 

Indeed, we have obtained a result on the $\varepsilon$-dense property of orbits of $S$-integers under $T_{b}$, which directly gives Theorem \ref{thm:finite}. 

\begin{thm}\label{thm:dense}
Let $b \geq 2$ be an integer and $S$ be a non-empty finite set of primes not containing any prime divisor of $b$. For any $\varepsilon>0$, there exists an effectively computable positive number $D$, such that for any $\frac{a}{d} \in \Z_{S}\cap [0,1)$ with $(a,d)=1$ and $d>D$, the orbit of $\frac{a}{d}$ under $T_b$, 
\begin{equation}\label{eqorbitTbad}
{\rm Orb}_{T_b}\left(\frac{a}{d}\right):=\left\{T_{b}^{i}\left(\frac{a}{d}\right) \colon i \geq 0 \right\},
\end{equation}
is $\varepsilon$-dense in $[0,1]$.
\end{thm}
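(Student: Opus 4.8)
The plan is to locate, inside the orbit $\Orb_{T_b}(a/d)$, a long arithmetic progression of equally spaced points whose gap tends to $0$ as $d\to\infty$; once the gap drops below $\varepsilon$ the orbit is automatically $\varepsilon$-dense. Since no prime of $S$ divides $b$, we have $\gcd(b,d)=1$ for every $S$-integer denominator $d$, so multiplication by $b$ permutes $\Z/d\Z$ and
\[
\Orb_{T_b}\!\left(\frac{a}{d}\right) = \left\{ \frac{ab^{i}\bmod d}{d} : i\geq 0\right\} = \frac{1}{d}\,a\langle b\rangle \pmod d,
\]
where $\langle b\rangle\le(\Z/d\Z)^{*}$ is the cyclic group generated by $b$. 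The whole point will be to describe a convenient subgroup of $\langle b\rangle$ very explicitly.

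First I would isolate a constant. Set $n=\lcm_{p\in S}\ord_{p^{c_p}}(b)$, where $c_p=1$ for odd $p$ and $c_2=2$; crucially $n$ depends only on $b$ and $S$, not on $d$. Then $b^{n}-1$ is a fixed nonzero integer; write $v_p=v_p(b^{n}-1)$ and $W=\prod_{p\in S}p^{v_p}$, again constants depending only on $b$ and $S$. The central claim is that, modulo $d=\prod_{p\in S}p^{e_p}$,
\[
\langle b^{n}\rangle \equiv 1+W'\,\Z/d\Z \pmod d, \qquad W'=\prod_{p\in S}p^{\min(v_p,e_p)},
\]
an arithmetic progression of common difference $W'\mid d$ with exactly $d/W'$ terms. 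To prove this I would argue prime by prime via the Chinese Remainder Theorem: the choice of $c_p$ guarantees that $b^{n}$ is a principal unit, and the standard computation of the order of a principal unit in $(\Z/p^{e_p}\Z)^{*}$ shows that $\langle b^{n}\rangle$ reduces mod $p^{e_p}$ to exactly $1+p^{\min(v_p,e_p)}\Z/p^{e_p}\Z$. Since these local subgroups have pairwise coprime orders, the diagonally generated cyclic group $\langle b^{n}\rangle$ mod $d$ must coincide with their product, which is precisely $1+W'\Z/d\Z$.

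Granting the claim, the orbit contains $a\langle b^{n}\rangle\pmod d$, realized by the points $T_b^{nk}(a/d)$ for $k\ge 0$. Because $\gcd(a,d)=1$ the unit $a$ maps the subgroup $W'\Z/d\Z$ onto itself, so $a\langle b^{n}\rangle$ is the coset $a+W'\Z/d\Z$; viewed in $[0,1)$ these are $d/W'$ points equally spaced on $\R/\Z$ with gap $W'/d\le W/d$. As $W$ is a fixed constant, choosing $D=W/\varepsilon$ forces this gap below $\varepsilon$ whenever $d>D$, and equally spaced points of gap $<\varepsilon$ are $\varepsilon$-dense in $[0,1]$. All quantities $n$, $v_p$, $W$, $D$ are effectively computable from $b$, $S$, $\varepsilon$, which gives the effectivity in the statement.

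The main obstacle I anticipate is the order computation in $(\Z/p^{e}\Z)^{*}$, and in particular the prime $p=2$, where the unit group is not cyclic and a naive choice of $n$ can leave $b^{n}$ in the non-principal part $1+2\Z_2\setminus(1+4\Z_2)$; the safeguard $c_2=2$ (forcing $b^{n}\equiv 1\pmod 4$, hence $v_2\ge 2$) is exactly what makes $\langle b^{n}\rangle$ reduce cleanly to $1+2^{v_2}\Z$. A secondary bookkeeping point is the possibility that some exponent $e_p$ is smaller than $v_p$; such primes contribute only a bounded factor (at most $p^{v_p}$) and are absorbed automatically by the local reduction $1+p^{\min(v_p,e_p)}\Z/p^{e_p}\Z$, so that $W'\le W$ and the final bound is unaffected.
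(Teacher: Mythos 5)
Your proof is correct, and while the overall strategy is the same as the paper's (exhibit, inside the orbit, a full arithmetic progression of points with gap $O(1/d)$), the engine you use is genuinely different. The paper works with $\lan \overline{b}\ran$ itself: its key Lemma~\ref{lem:order_d} computes $\ord(\overline{b},d)$ exactly, handling the cross-prime interference through the constants $N_p$ (which involve $v_p(\ord(\overline{b},q^{n_q}))$ for all $q\in S$), and Theorem~\ref{thm:A_1A_2} then converts this order formula into the set identity $A_1=A_2$ by an inclusion-plus-cardinality count, from which the equally spaced points $\frac{a}{d}+\frac{j}{d_0}$ are read off. You instead pass to a fixed power $b^{n}$, chosen so that $b^{n}$ is a principal unit at every $p\in S$ (your safeguard $c_2=2$ at the prime $2$ is exactly the right one, since $(\Z/2^e\Z)^{\times}$ is not cyclic); then $\lan b^{n}\ran \bmod p^{e_p}$ is precisely the congruence subgroup $1+p^{\min(v_p,e_p)}\Z/p^{e_p}\Z$, and because these local subgroups have pairwise coprime orders, the Chinese Remainder gluing is automatic and yields $\lan b^{n}\ran \bmod d = 1+W'\Z/d\Z$ on the nose. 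This is cleaner: no exact order formula for $b$, no counting argument, and the progression appears directly as the congruence class $a+W'\Z/d\Z$.

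What the paper's heavier route buys is the quality of the constant: from Lemma~\ref{lem:bounds_Np} one gets $\log D \ll \frac{P^2}{\log P}\log b$, where $P$ is the largest prime of $S$, and this is what powers the quantitative Theorem~\ref{thm:P}. Your $D=W/\varepsilon$ with $W=\prod_{p\in S}p^{v_p(b^{n}-1)}$ is effectively computable, as the statement requires, but with the crude bound $v_p(b^{n}-1)\leq n\log b/\log p$ and $n=\lcm_{p\in S}\ord(\overline{b},p^{c_p})$ possibly of size $e^{cP}$, it is a priori exponentially larger. This gap is repairable within your framework: a lifting-the-exponent estimate such as $v_p(b^{n}-1)=v_p(b^{t_p}-1)+v_p(n/t_p)$ for odd $p$, with $t_p=\ord(\overline{b},p)$, brings $\log W$ back down to $\ll \frac{P^2}{\log P}\log b$, making the two constructions comparable. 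For Theorem~\ref{thm:dense} as stated, your argument is complete as written.
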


\begin{rem}
The number $D$ in Theorem \ref{thm:dense} is given in \eqref{eqdefD}. 
\end{rem}

We also have a quantitative result that strengthens Theorem \ref{thm:finite}. 

\begin{thm} \label{thm:P}
Let $b \geq 2$ be an integer and $A \subset [0,1)$ be a set satisfying $T_b(A\cap \mathbb{Q})\subseteq A$. Suppose $A$ is not dense in $[0,1]$ and let $\varepsilon = \sup\{\dist(x,A) \colon x \in [0,1)\},$ where $\dist(x,A)$ denotes the distance between $x$ and $A$. Then there exists an absolute constant $K>0$ such that for any rational number $\frac{a}{d}$ in $A$ with $\gcd(ab,d) = 1$ and $\varepsilon d \geq 3$, we have 
\begin{align}\label{eqPdbound}
P(d) \geq  \begin{cases} K\sqrt{ \frac{1}{\log{b}} \log{(2\varepsilon d)} \log \log{(2\varepsilon d)}} &\quad\text{ if }\quad P(d) > b,\\
K\sqrt{ \frac{1}{\log{b}} \log{(2\varepsilon d)} } &\quad\text{ if }\quad P(d)<b.
\end{cases} 
\end{align}
\end{thm}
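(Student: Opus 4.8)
The plan is to run the contrapositive of a quantitative equidistribution argument, in the same spirit as (and refining the mechanism behind) Theorem~\ref{thm:dense}. Fix a rational $\frac ad\in A$ with $\gcd(ab,d)=1$ and set $t=\ord_d(b)$; since $\gcd(b,d)=1$, the point $\frac ad$ is purely periodic under $T_b$ and its orbit is exactly $\{\frac{ab^i\bmod d}{d}\colon 0\le i<t\}$, which by hypothesis lies in $A$. Because $\varepsilon=\sup_x\dist(x,A)$, I can choose $x_0$ with $\dist(x_0,A)$ arbitrarily close to $\varepsilon$; then the orbit avoids an open interval of length close to $2\varepsilon$ about $x_0$. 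Rescaling by $d$, the residues $\{ab^i\bmod d\}$ miss a block $J$ of at least $2\varepsilon d-2\ge 4$ consecutive residues modulo $d$, the hypothesis $\varepsilon d\ge 3$ guaranteeing that $J$ is long and nonempty.

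Next I would convert ``the orbit misses the long block $J$'' into a lower bound on an exponential sum over the cyclic group $G=\lan b\ran\le(\Z/d\Z)^*$. Expanding the indicator of $J$ into additive characters and using the Fej\'er-type bound $|\widehat{\mathbf 1_J}(m)|\le\min(|J|,\,d/(2|m|))$, the identity $\sum_{0\le i<t}\mathbf 1_J(ab^i)=0$ yields
\begin{equation*}
2\varepsilon t\le\frac{|J|\,t}{d}\le\sum_{m\ne0}\min\!\Big(\tfrac{|J|}{d},\tfrac1{2|m|}\Big)\,\Big|\sum_{0\le i<t}e_d(m\,ab^i)\Big|,
\end{equation*}
where $e_d(x)=e^{2\pi i x/d}$. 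Since the weights concentrate on low frequencies $1\le|m|\lesssim 1/\varepsilon$, this forces some such $m$ to satisfy $\big|\sum_{i<t}e_d(mab^i)\big|\gtrsim \varepsilon t/\log d$. Thus non-density amounts to the presence of an abnormally large Weyl sum for the geometric progression $(ab^i)_{i<t}$ at a low frequency.

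The heart of the proof is to bound these Weyl sums from above in terms of $P=P(d)$. I would factor $d=\prod_j p_j^{e_j}$ by the CRT and control the sum over $G$ through its projections $G_j=\lan b\ran\le(\Z/p_j^{e_j}\Z)^*$, isolating the largest prime $P$. For the modulus $P$ (and its power), when $P>b$ there is genuine multiplicative structure and one can invoke Weil/Gauss-sum bounds for sums over the subgroup generated by $b$, gaining an extra logarithmic factor; when $P<b$ only a cruder estimate is available. Combining such an upper bound with the lower bound of the previous step shows that non-density forces each local order $t_j=\ord_{p_j^{e_j}}(b)$ to be small, of size $O(P/\varepsilon)$ up to logarithmic factors. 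This is the step I expect to be the main obstacle: one needs exponential-sum estimates over $\lan b\ran\bmod d$ that are strong enough to beat the $\varepsilon t/\log d$ lower bound, that expose the dependence on $P(d)$ cleanly, and that carry absolute constants so that the final $K$ is independent of $b$, $\varepsilon$, and $A$. The split into $P>b$ and $P<b$ is precisely the point where the available cancellation changes quality and produces the two regimes of \eqref{eqPdbound}.

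Finally I would assemble the pieces by an elementary counting argument. For each prime power $p_j^{e_j}\|d$ we have $p_j^{e_j}\mid b^{t_j}-1$, hence $e_j\log p_j\le t_j\log b$, and summing over $j$ gives
\begin{equation*}
\log d=\sum_j e_j\log p_j\le\log b\sum_{j}t_j\le \log b\cdot\pi(P)\cdot\max_j t_j.
\end{equation*}
Inserting $\pi(P)\ll P/\log P$ together with the bound $\max_j t_j\ll P/\varepsilon$ (up to logarithmic factors) from the exponential-sum step produces an inequality of the shape $\log(2\varepsilon d)\ll\frac{P^2}{\log P}\log b$ in the range $P>b$, and $\log(2\varepsilon d)\ll P^2\log b$ when $P<b$. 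Since $\log\log(2\varepsilon d)\asymp\log P$ on the relevant scale, solving these relations self-consistently for $P$ yields exactly the two bounds of \eqref{eqPdbound}. The delicate points here are the normalization by $2\varepsilon d$, which makes every logarithm positive and the constant absolute under the hypothesis $\varepsilon d\ge 3$, and the inversion of the transcendental inequality relating $P$ and $\log d$, which is what upgrades the crude smoothness estimate into the sharp $\sqrt{\log d\,\log\log d}$ growth.
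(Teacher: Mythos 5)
Your Fourier reduction (steps one and two) is sound, but the proof has a genuine hole exactly where the content of the theorem lies: step three, the upper bound on the Weyl sums $\sum_{0\le i<t}e_d(mab^i)$ in terms of $P(d)$, which you yourself flag as ``the main obstacle,'' cannot be filled by citing Weil/Gauss-sum estimates. All known cancellation results for sums over a multiplicative subgroup $\lan \overline{b} \ran \leq (\Z/d\Z)^{\times}$ require the subgroup to be large relative to the modulus: for prime $d$ the Gauss-sum argument gives the bound $\sqrt{d}$, vacuous unless $t>\sqrt{d}$, and for $P$-smooth composite $d$ the available bounds are weaker still. But a priori one only knows $t=\ord(\overline{b},d)\ge \log d/\log b$, for which no cancellation is possible; a lower bound on $t$ (more precisely, on its local structure) is precisely what has to be proved. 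So your step three presupposes the information that constitutes the theorem. Moreover, the inference you want from it --- a large low-frequency Weyl sum forces \emph{every} local order $t_j=\ord(\overline{b},p_j^{e_j})$ to be small --- is a second unproved step, since $\lan\overline{b}\ran \pmod d$ is not the product of the local subgroups. The paper avoids equidistribution entirely: Lemma \ref{lem:order_d} (structure of $(\Z/p^n\Z)^{\times}$ plus CRT) gives $\ord(\overline{b},d)=d_0\,\ord(\overline{b},d_1)$ with $d_0=\prod_{e_p>N_p}p^{e_p-N_p}$, and Theorem \ref{thm:A_1A_2} upgrades this to the exact statement that the orbit is the union of $d_0$ translates, spaced $1/d_0$, of a scaled orbit; hence the orbit contains an arithmetic progression of gap $1/d_0$, and non-density immediately forces $d_0\le 1/(2\varepsilon)$, with no cancellation estimate anywhere.

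Separately, even granting your step three verbatim, your final accounting does not yield \eqref{eqPdbound}. From $e_j\log p_j\le t_j\log b$ and $\max_j t_j\ll P/\varepsilon$ you get $\log d\le \log b\,\sum_j t_j \ll \frac{P^2\log b}{\varepsilon \log P}$, in which $1/\varepsilon$ enters \emph{multiplicatively}; this cannot be converted into $\log(2\varepsilon d)\ll \frac{P^2}{\log P}\log b$, where $\varepsilon$ enters only additively through $\log(2\varepsilon)$. Solving your inequality for $P$ gives $P\gg \sqrt{\varepsilon}\,\sqrt{\frac{1}{\log b}\log(2\varepsilon d)\log\log(2\varepsilon d)}$, i.e.\ a constant degrading like $\sqrt{\varepsilon}$ and hence depending on $A$, contrary to the absoluteness of $K$ asserted in the theorem. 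The paper gets $\varepsilon$ in the right place because non-density caps the single global quantity $d_0$ at $1/(2\varepsilon)$, so by \eqref{eqdefD} one has $\log d\le \log D=-\log(2\varepsilon)+\sum_{p}N_p\log p$; then Lemma \ref{lem:bounds_Np} ($N_p\log p\ll (p+P)\log b$) and the prime number theorem give $\log(2\varepsilon d)\ll \frac{P^2}{\log P}\log b$, and the split $P>b$ versus $P<b$ finishes the proof. (Your intermediate claim $t_j\ll P/\varepsilon$ up to logarithms is also stronger than what is true: under non-density the order analysis yields $t_j\le \frac{1}{2\varepsilon}\ord(\overline{b},p_j^{N_{p_j}})$, a quantity that in general is only $O(P^2/\varepsilon)$, not $O(P/\varepsilon)$.)
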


\begin{rem}
{\rm (i)} The absolute constant $K$ in Theorem \ref{thm:P} can be effectively computed.

{\rm (ii)} Note that the assumption that ${\rm gcd}(ab,d)=1$ guarantees that $P(d)\neq b$. 

{\rm (iii)} The assumption that $\varepsilon d \geq 3$ is simply to guarantee that $\log\log(2\varepsilon d)$ is positive. The number $3$ can be slightly decreased if needed.

{\rm (iv)} Theorem \ref{thm:P} can be applied to any generalized Cantor set $C(b,\mathcal{D})$, since such set is $T_{b}$-invariant and $\varepsilon=\frac{m}{2b}$, where $m$ is the largest number of consecutive integers (which may be a single integer) in $\{0,1,\ldots,b-1\} \setminus \mathcal{D}$. Note that our bounds become sharper when $m$ increases, which coincides with the intuition that when $m$ increases there are less rational numbers in $C(b,\mathcal{D})$.  This phenomenon is not reflected in Theorem \ref{thm:Shp}.

\end{rem}

\section{Finiteness of \texorpdfstring{$S$}{Sinteger}-integers}

Let $b \geq 2$ be an integer and $S$ be a non-empty finite set of primes not containing any prime divisor of $b$. In this section, we  prove our $\varepsilon$-dense result Theorem \ref{thm:dense},  and then  deduce the finiteness result Theorem \ref{thm:finite}.

We begin with some notations. For any positive integers $b,L$, we use $\overline{b} \pmod{L}$ to denote the coset in $\Z/L\Z$ containing $b$, and  when $L$ is clear, we simply write it as $\overline{b}$. Let  $(\Z/L\Z)^{\times}$ be the multiplicative group which consists of $\overline{\ell}$ with $\ell$ relatively prime with $L$.  If $b$ and $L$ are coprime, then we denote the order of $\overline{b}$ by $\ord(\overline{b},L)$ and the cyclic subgroup generated by $\overline{b}$ is $\lan \overline{b} \ran$. Equivalently, $\ord(\overline{b},L)$ is the smallest positive integer such that $b^{\ord(\overline{b},L)} \equiv 1 \pmod{L}$. Let $G$ be a finite group, the \textit{exponent} of $G$, denoted by $\exp(G)$, is the smallest positive integer such that $g^{\exp(G)} = 1$ for all $g \in G$.

Recall the following basic result on the multiplicative group  $(\Z/p^n\Z)^{\times}$.

\begin{lem} [{\cite[Chapter 4]{IR90}}] \label{thm:multi_group_strucutre}
For any $n \geq 3$, we have $$(\Z/2^n\Z)^{\times} \cong \lan\overline{-1}\ran \times \lan \overline{5} \ran \cong \Z/2\Z \times \Z/2^{n-2}\Z.$$ 
For any odd prime $p$ and $n \geq 1$, we have $$(\Z/p^n\Z)^{\times} \cong \Z/(p-1)p^{n-1}\Z.$$
\end{lem}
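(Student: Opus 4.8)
The plan is to treat the odd-prime case and the prime $2$ case separately, in each exhibiting explicit generators and pinning down their orders through binomial-type congruences.

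For an odd prime $p$, I would first recall that $(\Z/p\Z)^{\times}$ is cyclic of order $p-1$, which follows from the standard fact that a finite subgroup of the multiplicative group of a field is cyclic (equivalently, $x^{m}-1$ has at most $m$ roots modulo $p$). The heart of the argument is then the congruence
$$(1+p)^{p^{k-1}} \equiv 1 + p^{k} \pmod{p^{k+1}} \qquad (k \geq 1),$$
proved by induction on $k$ via the binomial theorem, using that $p$ is odd so that all cross terms carry a strictly higher power of $p$. Reducing at $k=n$ gives $(1+p)^{p^{n-1}} \equiv 1 \pmod{p^{n}}$, while at $k=n-1$ the right-hand side is $1+p^{n-1} \not\equiv 1$, so $\overline{1+p}$ has order exactly $p^{n-1}$ in $(\Z/p^{n}\Z)^{\times}$. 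Lifting a generator of $(\Z/p\Z)^{\times}$ and extracting a suitable power produces an element of order $p-1$; since $\gcd(p-1,p^{n-1})=1$, the two cyclic subgroups meet trivially and their product is cyclic of order $(p-1)p^{n-1}=\#(\Z/p^{n}\Z)^{\times}$, giving the claimed isomorphism.

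For $p=2$ and $n \geq 3$ I would establish the analogous but shifted congruence
$$5^{2^{k-1}} \equiv 1 + 2^{k+1} \pmod{2^{k+2}} \qquad (k \geq 1),$$
again by induction, which forces $\overline{5}$ to have order exactly $2^{n-2}$ (the case $n=3$ being checked directly, since $5^{2}=25\equiv 1 \pmod 8$). The feature distinguishing $p=2$ is that $\overline{-1} \notin \lan \overline{5} \ran$: every power of $5$ is congruent to $1 \pmod 4$, whereas $-1 \equiv 3 \pmod 4$, so $\lan \overline{-1} \ran \cap \lan \overline{5} \ran = \{\overline{1}\}$. Hence the internal product $\lan \overline{-1} \ran \times \lan \overline{5} \ran$ has order $2 \cdot 2^{n-2} = 2^{n-1} = \#(\Z/2^{n}\Z)^{\times}$ and therefore fills the whole group, yielding $(\Z/2^{n}\Z)^{\times} \cong \Z/2\Z \times \Z/2^{n-2}\Z$.

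The main obstacle is establishing the two congruences with the correct $p$-adic valuations, as this is precisely where the structural difference surfaces. For odd $p$ the element $1+p$ climbs through every power of $p$ and the group stays cyclic, while for $p=2$ the cross term $\binom{2}{1}=2$ already raises the valuation, $3^{2}=9\equiv 1 \pmod 8$ fails to generate, and the $2$-part splits off the separate factor $\lan \overline{-1}\ran$. Once these valuation estimates are secured, the remainder is routine bookkeeping with coprime orders and cardinality counts.
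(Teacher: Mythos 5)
Your proof is correct: the binomial-theorem valuation estimates for $(1+p)^{p^{k-1}}$ and $5^{2^{k-1}}$ are exactly right, the edge cases ($n=1$ for odd $p$, $n=3$ for $p=2$) are handled, and the coprime-order/counting arguments close both cases. The paper itself gives no proof—it cites this as a standard fact from Ireland--Rosen, Chapter 4—and your argument is precisely the classical primitive-root argument found there, so there is nothing to reconcile.
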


Next lemma gives relation between orders of elements.

\begin{lem}\cite[Proposition 5, Chapter 2, Section 3]{DummitFoote}\label{lem:order}
Let $G$ be a group and $g\in G$ be an element with finite order $s$. Then for each  integer $t\geq1$, the order of $g^t$ is  
\[\frac{s}{{\rm gcd}(s,t)}.\]
\end{lem}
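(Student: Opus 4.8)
The plan is to reduce everything to the elementary characterization that, for an element $g$ of order $s$, a power $g^m$ equals the identity if and only if $s \mid m$. Granting this, the order of $g^t$ is by definition the least positive integer $k$ with $(g^t)^k = g^{tk} = e$, which translates into the purely arithmetic task of finding the least positive $k$ such that $s \mid tk$.

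First I would establish the divisibility criterion. Writing $m = qs + r$ with $0 \le r < s$ and using $g^s = e$, one gets $g^m = g^r$, so $g^m = e$ forces $r = 0$ by the minimality of $s$ in the definition of order. This is the only place the notion of order is invoked, and it is routine.

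Next I would set $d = \gcd(s,t)$ and write $s = d s'$, $t = d t'$ with $\gcd(s', t') = 1$. To see that $s/d = s'$ is an upper bound for the order of $g^t$, I compute $(g^t)^{s'} = g^{t s'} = g^{d t' s'} = (g^{s})^{t'} = e$, so the order of $g^t$ divides $s'$. For the matching lower bound, suppose $(g^t)^k = e$; then by the criterion $s \mid tk$, i.e. $d s' \mid d t' k$, hence $s' \mid t' k$, and since $\gcd(s', t') = 1$ Euclid's lemma gives $s' \mid k$. Thus $s'$ divides the order as well, and combining the two bounds shows the order of $g^t$ is exactly $s' = s/\gcd(s,t)$.

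The only mildly subtle point is the appeal to the coprimality of $s'$ and $t'$ in passing from $s' \mid t' k$ to $s' \mid k$; every other step is a direct manipulation of exponents. I expect no genuine obstacle, as this is a standard fact (indeed the statement is quoted verbatim from Dummit and Foote), so the work is essentially bookkeeping around the gcd decomposition.
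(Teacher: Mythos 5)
Your proof is correct and complete: the reduction to the criterion $g^m = e \iff s \mid m$, followed by the gcd decomposition $s = ds'$, $t = dt'$ and the two-sided divisibility argument, establishes the claim with no gaps. The paper itself offers no proof of this lemma (it simply cites Dummit and Foote), and your argument is essentially the standard textbook proof being referenced, so there is nothing to reconcile between the two.
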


Now we prove a key lemma concerning the order of $\overline{b}$.

\begin{lem} \label{lem:order_d}
Let $b \geq 2$ be an integer and $S$ be a non-empty finite set of primes not containing any prime divisor of $b$. For each $p \in S$, define
$$n_{p} = \begin{cases}
	\max\{3, v_{2}(b-1), v_{2}(b+1)\}, & \text{ if } p=2,\\
	\max\{1, v_{p}(b^{p-1}-1)\}, & \text{ if } p \neq 2,
\end{cases}$$
and
\begin{equation}\label{eqNp}
N_{p} = \max\{n_{p} - v_{p}(\ord(\overline{b},p^{n_{p}})) + v_{p}(\ord(\overline{b},q^{n_{q}})) \colon q \in S \}.
\end{equation}
Then for any integer $d = \prod_{p \in S} p^{e_{p}}$, we have
\begin{equation} \label{eq:order}
\ord(\overline{b},d) =\left(\prod_{p \in S: \ e_{p} > N_{p}} p^{e_{p}-N_{p}}\right) \ord\left(\overline{b},\prod_{p \in S} p^{\min\{e_{p}, N_{p}\}}\right),
\end{equation}
where $\prod_{p \in S:\ e_{p} > N_{p}} p^{e_{p}-N_{p}}$ is defined to be $1$ if $e_{p} \leq N_{p}$ for all $p \in S$.
\end{lem}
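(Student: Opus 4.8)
The plan is to reduce to prime powers by the Chinese Remainder Theorem and then control the growth of the order in each prime factor separately. Since the factors $p^{e_p}$, $p \in S$, are pairwise coprime,
\[
\ord(\overline b, d) = \lcm_{p \in S}\ord(\overline b, p^{e_p}),
\]
so for every prime $q$ one has $v_q(\ord(\overline b, d)) = \max_{p \in S} v_q(\ord(\overline b, p^{e_p}))$, and it suffices to verify that the right-hand side of \eqref{eq:order} yields the same $q$-adic valuations for all $q$; the full identity then follows prime by prime.

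The technical heart is a per-prime stabilization statement: for each $p \in S$ and every $e \geq n_p$,
\[
\ord(\overline b, p^e) = p^{\,e-n_p}\,\ord(\overline b, p^{n_p}).
\]
For odd $p$ this follows from \Cref{thm:multi_group_strucutre}: since $(\Z/p^e\Z)^\times$ is cyclic of order $(p-1)p^{e-1}$, the order $\ord(\overline b, p^e)$ equals $\ord(\overline b, p)$ times a power of $p$, and a lifting-the-exponent computation applied to $b^{\ord(\overline b,p)}-1$ shows this power is $\max\{0, e-n_p\}$, with $n_p = \max\{1, v_p(b^{p-1}-1)\}$ being exactly the exponent at which growth starts. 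The case $p = 2$ is the main obstacle: here one uses $(\Z/2^e\Z)^\times \cong \lan\overline{-1}\ran \times \lan\overline 5\ran$, separates $b \equiv 1$ from $b \equiv 3 \pmod 4$, follows the order inside the cyclic factor $\lan\overline 5\ran$ with \Cref{lem:order}, and the crux is to verify that $n_2 = \max\{3, v_2(b-1), v_2(b+1)\}$ is large enough that the stabilized growth holds for all $e \geq n_2$; this $2$-adic bookkeeping is the most delicate part of the argument. Alongside this I would record one further fact: for $q \neq p$ the valuation $v_q(\ord(\overline b, p^e))$ is independent of $e$, since $\ord(\overline b, p^e)$ is $\ord(\overline b, p)$ times a power of $p$, so its $q$-part equals $v_q(\ord(\overline b, p^{n_p}))$.

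With these facts in hand the rest is bookkeeping. Fixing $p$ and writing $A = v_p(\ord(\overline b, p^{n_p}))$, the CRT formula becomes
\[
v_p(\ord(\overline b, d)) = \max\Big\{\,v_p(\ord(\overline b, p^{e_p})),\ \textstyle\max_{q \in S,\, q \neq p} v_p(\ord(\overline b, q^{n_q}))\,\Big\},
\]
in which the first term equals $A + (e_p - n_p)$ once $e_p \geq n_p$ and the second is a constant. This growing term overtakes all competitors exactly as $e_p$ crosses $N_p = n_p - A + \max_{q \in S} v_p(\ord(\overline b, q^{n_q}))$, and the $q = p$ term in the maximum forces $N_p \geq n_p$, so the cap $\min\{e_p, N_p\}$ always lies in the stabilized range. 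If $e_p \leq N_p$, then $p^{e_p}$ and $p^{\min\{e_p,N_p\}}$ coincide, so $d$ and $d^{\ast} := \prod_{p}p^{\min\{e_p,N_p\}}$ have the same $p$-adic contribution and \eqref{eq:order} holds at $p$ because the extra product carries no factor of $p$; if $e_p > N_p$, the stabilization formula makes the diagonal term dominate and gives $v_p(\ord(\overline b,d)) = (e_p - N_p) + v_p(\ord(\overline b, d^{\ast}))$, which is precisely the contribution of $p$ to the right-hand side of \eqref{eq:order}. For $q \notin S$ all the valuations are independent of the exponents and untouched by the extra product, so both sides agree there too; matching every valuation gives \eqref{eq:order}.
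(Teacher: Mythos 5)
Your architecture---CRT reduction to prime powers, a per-prime stabilization claim $\ord(\overline{b},p^e)=p^{e-n_p}\,\ord(\overline{b},p^{n_p})$ for $e\ge n_p$, then $q$-adic valuation bookkeeping---is the same as the paper's, and most of it is sound. Your closing bookkeeping paragraph is correct (and in fact stated more carefully than the paper's own version of that computation, which contains several notational slips), and your odd-prime case via lifting the exponent is a genuinely different and cleaner route than the paper's argument with a generator of the cyclic group: LTE applied to $b^{\ord(\overline{b},p)}-1$ gives $\ord(\overline{b},p^e)=\ord(\overline{b},p)\,p^{\max\{0,\,e-v_p(b^{p-1}-1)\}}$ directly, which yields both the stabilization and your auxiliary fact that the cross valuations $v_q(\ord(\overline{b},p^e))$, $q\neq p$, are independent of $e$. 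If $2\notin S$, your proposal is essentially a complete and correct proof.

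The genuine gap is the step you deferred as ``the crux'': verifying that $n_2=\max\{3,v_2(b-1),v_2(b+1)\}$ is large enough for the $2$-adic stabilization. That verification cannot be completed, because the claim is false for every $b\equiv 7\pmod 8$. Take $b=7$, so $n_2=3$. Since $7^2=49\equiv 1\pmod{16}$, we have $\ord(\overline{7},2^4)=\ord(\overline{7},2^3)=2$, whereas stabilization would force $\ord(\overline{7},2^4)=4$; in general $\ord(\overline{7},2^e)=2^{\max\{1,e-3\}}$. Executing your own plan for $b\equiv 3\pmod 4$ (write $\overline{b}=-\overline{c}$ with $\overline{c}\in\lan\overline{5}\ran$) shows why: the order of $\overline{b}$ modulo $2^e$ is $2^{\max\{1,\,e-v_2(b+1)\}}$, which only begins doubling at $e=v_2(b+1)+1$, so the correct threshold for $b\equiv 3 \pmod 4$ is $v_2(b+1)+1$, one more than the paper's $n_2$ provides. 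Consequently the lemma itself fails as stated: for $S=\{2\}$, $b=7$, $d=2^5$, it predicts $\ord(\overline{7},32)=2^{5-3}\cdot 2=8$, yet $7^4=2401\equiv 1\pmod{32}$, so $\ord(\overline{7},32)=4$. You are in good company here: the paper's own proof breaks at exactly the same point, when it asserts that $b\not\equiv\pm 1\pmod{2^e}$ forces $\overline{b}\in\lan\overline{5}\ran$ or $\lan\overline{-5}\ran$; this is false for $b\equiv 7\pmod 8$, where $\overline{b}=-\overline{5}^{\,2j}$ lies in neither subgroup. The repair is to enlarge the threshold, e.g.\ $n_2=\max\{3,v_2(b-1),v_2(b+1)+1\}$, after which both your argument and the paper's go through. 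But as submitted, your proposal postpones precisely the assertion that is wrong, so it does not establish the stated lemma.
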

\begin{proof}
We start with the case $S=\{2\}$. In this case, we have $N_{2} = n_{2}$. If $e_{2} \leq N_{2}$, then (\ref{eq:order}) is trivial. So it suffices to prove
$$\ord(\overline{b},2^{e}) = 2^{e-n_{2}} \ord(\overline{b}, 2^{n_{2}}) \text{ for any } e > n_{2}.$$
Since $e > n_{2}$, we have $b \not\equiv \pm 1 \pmod{2^e}.$ So Lemma \ref{thm:multi_group_strucutre} implies that $\overline{b} \in \lan \overline{5} \ran \text{ or } \lan \overline{-5} \ran$ in $(\Z/2^{e}\Z)^{\times}$. Let $\overline{g} = \overline{5}$ or $\overline{-5}$ such that $\overline{b} = \overline{g}^{t}$ for some $t \geq 1$. In the group $(\Z/2^{n_{2}+1}\Z)^{\times}$, the element $\overline{g}$ has order $2^{n_{2}-1}$ by Lemma \ref{thm:multi_group_strucutre}, so Lemma \ref{lem:order} and $\overline{b} = \overline{g}^{t}$ imply that $$\ord(\overline{b},2^{n_{2}+1}) \gcd(t, 2^{n_{2}-1}) = 2^{n_{2}-1}.$$ Note that $b \not\equiv 1 \pmod{2^{n_{2}+1}}$, so $\ord(\overline{b},2^{n_{2}+1}) \neq 1$, hence $\gcd(t, 2^{n_{2}-1}) \leq 2^{n_{2}-2}$, and thus $v_{2}(t) \leq n_{2}-2$. Applying the same argument to the groups $(\Z/2^{n_{2}}\Z)^{\times}$ and $(\Z/2^{e}\Z)^{\times}$, we have 
\begin{align*}
\ord(\overline{b},2^{n_{2}}) \gcd(t, 2^{n_{2}-2})= 2^{n_{2}-2},\quad \ord(\overline{b},2^{e}) \gcd(t, 2^{e-2})= 2^{e-2}.
\end{align*}
Now $v_{2}(t) \leq n_{2}-2$ implies that $\gcd(t, 2^{n_{2}-2}) = \gcd(t, 2^{e-2})$, so
$$\ord(\overline{b},2^{e}) = 2^{e-n_{2}} \ord(\overline{b}, 2^{n_{2}}).$$

Next we treat the case $S=\{p\}$ for some odd prime $p$. The proof for this case is a simpler version of the $S=\{2\}$ case. Again, it suffices to prove
$$\ord(\overline{b},p^{e}) = p^{e-n_{p}} \ord(\overline{b}, p^{n_{p}}) \text{ for any } e > n_{p}.$$ 
Let $\overline{g}$ be a generator of $(\Z/p^n\Z)^{\times}$, so $\overline{b} = \overline{g}^{t}$ for some $t \geq 1$. Applying Lemma \ref{thm:multi_group_strucutre} and Lemma \ref{lem:order} to the groups $(\Z/p^{n_{p}}\Z)^{\times}$, $(\Z/p^{e}\Z)^{\times}$ and $(\Z/p^{n_{p}+1}\Z)^{\times}$, we have 
\begin{align*}
\ord(\overline{b},p^{n_{p}}) \gcd(t, (p-1)p^{n_{p}-1}) &= (p-1)p^{n_{p}-1},\\
\ord(\overline{b},p^{e}) \gcd(t, (p-1)p^{e-1}) &= (p-1)p^{e-1},\\
\ord(\overline{b},p^{n_{p}+1}) \gcd(t, (p-1)p^{n_{p}}) &= (p-1)p^{n_{p}}.
\end{align*}
Since $b^{p-1} \not\equiv 1 \pmod{p^{n_{p}+1}}$, we have $p-1 \nmid \ord(\overline{b},p^{n_{p}+1})$. So $p^{n_{p}} \nmid \gcd(t, (p-1)p^{n_{p}})$, and thus $v_{p}(t) \leq n_{p}-1$. Then $\gcd(t, (p-1)p^{n_{p}-1}) = \gcd(t, (p-1)p^{e-1})$ and therefore
$$\ord(\overline{b},p^{e}) = p^{e-n_{p}} \ord(\overline{b}, p^{n_{p}}).$$ 

Finally we consider the general case. We are going to show that 
\begin{align*}
v_{q}(\ord(\overline{b},d)) = v_{q}\left(\prod_{p \in S: \ e_{p} > N_{p}} p^{e_{p}-n_{p}}\right) + v_{q}\left(\ord\left(\overline{b},\prod_{p \in S} p^{\min\{e_{p}, n_{p}\}}\right)\right)
\end{align*}
for all primes $q$, which would imply (\ref{eq:order}).

By the Chinese Reminder Theorem, the map
\begin{align*}
f: \Z/d\Z &\to \prod_{p \in S} \Z/p^{e_{p}}\Z \\
\overline{a} \pmod{d} &\mapsto (\overline{a} \pmod{p^{e_{p}}})_{p \in S}
\end{align*}
is a ring isomorphism. So it induces a group isomorphism 
$$ f: (\Z/d\Z)^{\times} \to \prod_{p \in S} (\Z/p^{e_{p}}\Z)^{\times}.$$
Therefore
\begin{align*}
\ord(\overline{b},d) &= \exp \lan \overline{b} \pmod{d} \ran \\
&= \exp \lan (\overline{b} \pmod{p^{e_{p}}})_{p \in S} \ran \\
&= \lcm\{ \exp \lan \overline{b} \pmod{p^{e_{p}}} \ran \colon p \in S \} \\
&= \lcm\{ \ord(\overline{b},p^{e_{p}}) \colon p \in S \}.
\end{align*}
So for any prime $q$, we have 
\begin{align}
v_{q}(\ord(\overline{b},d)) &= v_{q} (\lcm\{ \ord(\overline{b},p^{e_{p}}) \colon p \in S \}) \nonumber \\ 
&= \max\{v_{q}(\ord(\overline{b},p^{e_{p}})) \colon p \in S \}. \label{eq:LSH}
\end{align}
Similarly, 
\begin{align}
v_{q}\left(\ord\left(\overline{b},\prod_{p \in S} p^{\min\{e_{p}, N_{p}\}}\right)\right) = \max\{ v_{q}(\ord(\overline{b},p^{\min\{e_{p}, N_{p}\}})) \colon p \in S \}. \label{eq:RHS}
\end{align}
For any $p \in S$, since $N_{p} \geq n_{p}$, the cases we have proven imply that \begin{align}
\ord(\overline{b},p^{e_{p}}) = p^{\max\{e_{p}-n_{p},0\}} \ord(\overline{b},p^{\min\{e_{p}, n_{p}\}}) \label{eq:order_pn}
\end{align}
and
\begin{align}
\ord(\overline{b},p^{e_{p}}) = p^{\max\{e_{p}-N_{p},0\}} \ord(\overline{b},p^{\min\{e_{p}, N_{p}\}}). \label{eq:order_pN}
\end{align}

If either $q \notin S$ or $e_{q} \leq N_{q}$, we have $$v_{q}(\ord(\overline{b},p^{e_{p}})) = v_{q}(\ord(\overline{b},p^{\min\{e_{p}, N_{p}\}})).$$
So (\ref{eq:LSH}) and (\ref{eq:RHS}) imply that
$$v_{q}(\ord(\overline{b},d)) = v_{q}\left(\ord\left(\overline{b},\prod_{p \in S} p^{\min\{e_{p}, N_{p}\}}\right)\right).$$
Then (\ref{eq:order}) follows from this since $v_{q}(\prod_{p \in S:\ e_{p} > N_{p}} p^{e_{p}-N_{p}}) = 0$.

If $q \in S$ and $e_{q} > N_{q}$, for any $p \neq q$, we have 
\begin{align*}
v_{q}(\ord(\overline{b},p^{e_{p}})) &= v_{q}(\ord(\overline{b},p^{n_{p}})) \qquad (\text{by } \eqref{eq:order_pn})\\
&\leq N_{q} - n_{p} + v_{q}(\ord(\overline{b},q^{n_{q}}))  \\
&< e_{q} - n_{p} + v_{q}(\ord(\overline{b},q^{e_{q}}))   \\
&= v_{q}(q^{e_{p}-n_{p}} \ord(\overline{b},q^{n_{q}})) \\
&= v_{q}(\ord(\overline{b},q^{e_{q}})), \qquad (\text{by } \eqref{eq:order_pn})
\end{align*}
where  the second inequality follows from the definition of $N_{q}$ and the third inequality holds since $N_{q} < e_{q}$.
So
\begin{align*}
\max\{v_{q}(\ord(\overline{b},p^{e_{p}})) \colon p \in S \} &= v_{q}(\ord(\overline{b},q^{e_{q}})) \\
&= q^{e_{q}-N_{q}} + v_{q}(\ord(\overline{b},q^{N_{q}})).
\end{align*}
Combine this with (\ref{eq:LSH}) and (\ref{eq:RHS}), we deduce (\ref{eq:order}).
\end{proof}

Next result concerns the orbits of $S$-integers under $T_b$, it is the key of this article that leads to the proofs of our main theorems.

\begin{thm} \label{thm:A_1A_2}
Let $b \geq 2$ be an integer, $S$ be a non-empty finite set of primes not containing any prime divisor of $b$, and  $\frac{a}{d}$ be an $S$-integer with $(a,d)=1$ and $d=\prod_{p\in S}p^{e_p}$.  Let $N_p$ be as in \eqref{eqNp}, and set $d_{0} = \prod_{p \in S:\ e_{p} > N_{p}} p^{e_{p}-N_{p}}$, $d_{1} = \prod_{p \in S} p^{\min\{e_{p}, N_{p}\}}$. Define 
\begin{align*}
A_{1} &= \left\{T_{b}^{i}\left(\frac{a}{d}\right): 0 \leq i \leq \ord(\overline{b},d)-1 \right\},\\
A_{2} &= \left\{\frac{1}{d_{0}} T_{b}^{i}\left(\frac{a}{d_{1}}\right) + \frac{j}{d_{0}}: 0 \leq i \leq \ord(\overline{b},d_{1})-1, 0 \leq j \leq d_{0}-1 \right\}.
\end{align*}
Then $A_1 = A_2$.
\end{thm}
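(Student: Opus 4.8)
The plan is to reduce everything to counting residue classes and to feed in the order identity supplied by Lemma~\ref{lem:order_d}. First I would record the two structural facts that drive the argument. One is that $d = d_0 d_1$: checking prime by prime, each $p\in S$ with $e_p > N_p$ contributes $p^{e_p-N_p}\cdot p^{N_p}=p^{e_p}$ to $d_0 d_1$, while each $p$ with $e_p\le N_p$ contributes $p^{\min\{e_p,N_p\}}=p^{e_p}$ (it does not appear in $d_0$), so $d_0 d_1=\prod_{p\in S}p^{e_p}=d$. The other is that \eqref{eq:order} is precisely the statement $\ord(\overline{b},d)=d_0\,\ord(\overline{b},d_1)$. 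Since $\gcd(a,d)=1$ and no prime of $S$ divides $b$, we have $\gcd(ab,d)=1$, and I would rewrite $T_b^i\!\left(\tfrac{a}{d}\right)=\tfrac{ab^i \bmod d}{d}$. Then $A_1=\{(ab^i\bmod d)/d : 0\le i<\ord(\overline{b},d)\}$, and these are pairwise distinct because $ab^i\equiv ab^{i'}\pmod d$ forces $b^i\equiv b^{i'}\pmod d$, i.e.\ $i\equiv i'\pmod{\ord(\overline{b},d)}$; hence $|A_1|=\ord(\overline{b},d)$.

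Next I would unwind $A_2$. Since $d_1\mid d$ we get $\gcd(a,d_1)=\gcd(b,d_1)=1$, so $T_b^i\!\left(\tfrac{a}{d_1}\right)=s_i/d_1$ with $s_i:=ab^i\bmod d_1\in[0,d_1)$, and the values $s_0,\dots,s_{\ord(\overline{b},d_1)-1}$ are pairwise distinct by the same period argument as above. A typical element of $A_2$ then equals $\tfrac{1}{d_0}\tfrac{s_i}{d_1}+\tfrac{j}{d_0}=\tfrac{s_i+jd_1}{d}$ with $0\le s_i<d_1$ and $0\le j<d_0$. The map $(s_i,j)\mapsto s_i+jd_1$ is injective into $\{0,\dots,d-1\}$ (it is essentially the base-$d_1$ digit expansion), so $|A_2|=\ord(\overline{b},d_1)\cdot d_0=\ord(\overline{b},d)=|A_1|$.

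With the two cardinalities matched, it suffices to prove a single inclusion, and I would establish the easy one, $A_1\subseteq A_2$. Given an element with numerator $r:=ab^i\bmod d\in[0,d)$, reduce modulo $d_1$: since $d_1\mid d$ we have $r\equiv ab^i\equiv s_{i'}\pmod{d_1}$ where $i'=i\bmod \ord(\overline{b},d_1)$. Writing $r=s_{i'}+jd_1$, the constraints $0\le r<d=d_0d_1$ and $0\le s_{i'}<d_1$ force $0\le j\le d_0-1$, whence $\tfrac{r}{d}=\tfrac{1}{d_0}T_b^{i'}\!\left(\tfrac{a}{d_1}\right)+\tfrac{j}{d_0}\in A_2$. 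Thus $A_1\subseteq A_2$, and since $|A_1|=|A_2|<\infty$ we conclude $A_1=A_2$.

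The only genuinely nontrivial ingredient is the order identity $\ord(\overline{b},d)=d_0\,\ord(\overline{b},d_1)$, which is exactly Lemma~\ref{lem:order_d}; everything else is bookkeeping with residues. The point to be careful about is the distinctness underpinning the two cardinality counts (each orbit having period exactly the relevant order), because the equality $A_1=A_2$ is deduced purely from $A_1\subseteq A_2$ together with $|A_1|=|A_2|$, so an error in either count would break the conclusion. One could instead try to prove both inclusions directly, but the reverse inclusion $A_2\subseteq A_1$ amounts to lifting a residue lying in the orbit modulo $d_1$ to one in the orbit modulo $d$, which again rests on the same order identity; the cardinality route is cleaner and is the one I would follow.
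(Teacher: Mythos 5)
Your proposal is correct and follows essentially the same route as the paper's proof: establish the inclusion $A_1 \subseteq A_2$, show $\abs{A_1} = \ord(\overline{b},d)$ and $\abs{A_2} = d_0\ord(\overline{b},d_1)$ via the distinctness-of-orbit-points argument, and invoke Lemma~\ref{lem:order_d} to match the two cardinalities. The only difference is cosmetic: you track explicit residues $ab^i \bmod d$ and the digit decomposition $r = s_{i'} + jd_1$, while the paper manipulates fractional parts directly and takes $j = \lfloor d_0 T_b^i(a/d)\rfloor$, which amounts to the same bookkeeping.
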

\begin{proof}
First we prove $A_{1} \subset A_{2}$. For any $0 \leq i \leq \ord(\overline{b},d)-1$, by the definition of order, there exists $0 \leq i' \leq \ord(\overline{b},d_{1})-1$ such that $b^{i} \equiv b^{i'} \pmod{d_{1}}$. Let $j = \lfloor d_{0} T_{b}^{i}\left(\frac{a}{d}\right) \rfloor$, it is an integer between $0$ and $d_{0}-1$ since $T_{b}^{i}\left(\frac{a}{d}\right) \in [0,1)$. Then
\begin{align*}
d_{0} T_{b}^{i}\left(\frac{a}{d}\right) &= T_{b}^{i}\left(\frac{d_{0} a}{d}\right) + j\\
&= T_{b}^{i}\left(\frac{a}{d_{1}}\right) + j \qquad (\text{since } d=d_{0}d_{1}) \\
&= T_{b}^{i'}\left(\frac{a}{d_{1}}\right) + j.
\end{align*}
So $$T_{b}^{i}\left(\frac{a}{d}\right) = \frac{1}{d_{0}} T_{b}^{i'}\left(\frac{a}{d_{1}}\right) + \frac{j}{d_{0}} \in A_{2}.$$ Since $i$ is arbitrary, we have $A_{1} \subset A_{2}$.

Now we compute the cardinality of $A_{1}$. For any $0 \leq i_{1}, i_{2} \leq \ord(\overline{b},d)-1$, if $T_{b}^{i_{1}}\left(\frac{a}{d}\right) = T_{b}^{i_{2}}\left(\frac{a}{d}\right)$, then $a b^{i_{1}} \equiv a b^{i_{2}} \pmod{d}$. Without loss of generality, we assume $i_{1} \leq i_{2}$. Then since both $a,b$ are coprime with $d$, we have $b^{i_{2}-i_{1}} - 1 \equiv 0 \pmod{d}$ and thus $\ord(\overline{b},d) \mid (i_{2} - i_{1})$, which only happens when $i_{1} = i_{2}$ as $0 \leq i_{2} - i_{1} \leq \ord(\overline{b},d)-1$. Therefore the cardinality of $A_{1}$ equals $\ord(\overline{b},d)$.

Next we compute the cardinality of $A_{2}$. Suppose that 
$$\frac{1}{d_{0}} T_{b}^{i_{1}}\left(\frac{a}{d_{1}}\right) + \frac{j_{1}}{d_{0}} = \frac{1}{d_{0}} T_{b}^{i_{2}}\left(\frac{a}{d_{1}}\right) + \frac{j_{2}}{d_{0}}$$
for some $0 \leq i_{1}, i_{2} \leq \ord(\overline{b},d_{1})-1$ and $0 \leq j_{1}, j_{2} \leq d_{0}-1$. Multiply both sides by $d_{0}$, we have
$$T_{b}^{i_{1}}\left(\frac{a}{d_{1}}\right) + j_{1} = T_{b}^{i_{2}}\left(\frac{a}{d_{1}}\right) + j_{2}.$$
Comparing the integer parts of both sides, we deduce that  $j_{1} = j_{2}$. Then $T_{b}^{i_{1}}\left(\frac{a}{d_{1}}\right) = T_{b}^{i_{2}}\left(\frac{a}{d_{1}}\right).$ By a similar argument as in the previous paragraph, we have $i_{1} = i_{2}$. Therefore the cardinality of $A_{1}$ equals $d_{0} \ord(\overline{b},d_{1})$.

By Lemma \ref{lem:order_d}, we have $\ord(\overline{b},d)$ = $d_{0} \ord(\overline{b},d_{1})$. So $A_{1}$ and $A_{2}$ have the same cardinality and thus $A_{1} = A_{2}$.
\end{proof}

\begin{proof}[Proof of Theorem \ref{thm:dense}]
Let 
\begin{equation}\label{eqdefD}
D= \frac{1}{2 \varepsilon} \prod_{p \in S} p^{N_{p}}.
\end{equation} For any $i \geq 0$, write $i = k \ord(\overline{b},d) + i'$ where $k \in \N$ and $0 \leq i' \leq \ord(\overline{b},d)-1$. Since $b^{\ord(\overline{b},d)} \equiv 1 \pmod{d}$, we have 
$$ T_{b}^{i}\left(\frac{a}{d}\right) = T_{b}^{i'} T_{b}^{k \ord(\overline{b},d)} \left(\frac{a}{d}\right) = T_{b}^{i'} \left(\frac{a}{d}\right).$$
Hence $\Orb_{T_{b}}\left(\frac{a}{d}\right) = A_{1}$, and thus $\Orb_{T_{b}}\left(\frac{a}{d}\right) = A_{2}$ by Theorem \ref{thm:A_1A_2}. Note that $$A_{2} \supset \left\{\frac{1}{d_{0}}\cdot \frac{a}{d_{1}} + \frac{j}{d_{0}} \colon 0 \leq j \leq d_{0}-1\right\},$$ and the distance of any two consecutive elements in the later set is $\frac{1}{d_{0}}.$ By the definition of $d_{0}$, we have 
$$\frac{1}{d_{0}} \leq \prod_{p \in S} p^{N_{p} - e_{p}} = \frac{1}{d} \prod_{p \in S} p^{N_{p}} < \frac{1}{D} \prod_{p \in S} p^{N_{p}} = 2 \varepsilon.$$
Therefore $\Orb_{T_{b}}\left(\frac{a}{d}\right)$ is $\varepsilon$-dense in $[0,1]$.
\end{proof}

We next deduce Theorem \ref{thm:finite} from Theorem \ref{thm:dense}.

\begin{proof}[Proof of Theorem \ref{thm:finite}]
Since $A$ is not dense in $[0,1]$, there exists an interval $I \subset [0,1] \setminus A$ with positive length $\abs{I}$. Let $\varepsilon = \frac{\abs{I}}{2}$, so $A$ is not $\varepsilon$-dense in $[0,1]$. Let $\frac{a}{d} \in A \cap \Q$ with $\gcd(a,d)=1$. Since $T_{b}(A \cap \Q) \subset A$, we have $\Orb_{T_{b}}\left(\frac{a}{d}\right) \subset A$, and so $\Orb_{T_{b}}\left(\frac{a}{d}\right)$ is also not $\varepsilon$-dense in $[0,1]$. Therefore Theorem \ref{thm:dense} implies that $d<D$ for some positive number $D$. Clearly there are only finitely many rational numbers $\frac{a}{d} \in [0,1)$ with $d<D$, hence $A$ contains at most finitely many $S$-integers.
\end{proof}

\section{Largest prime divisor}
In this section, we use Theorem \ref{thm:dense} to prove Theorem \ref{thm:P}. Since we do not intend to compute the exact value of the absolute constant in Theorem \ref{thm:P}, we use the notations $\alpha \gg \beta$ and $\beta \ll \alpha$ to mean that $\abs{\alpha} \geq c \abs{\beta}$ for some absolute constant $c$, which can be effectively computed.

We start with upper bounds on $n_{p}$ and $N_{p}$. 

\begin{lem} \label{lem:bounds_Np}
Keep the notations of Lemma \ref{lem:order_d}. Let $P$ be the largest prime in $S$. Then for any $p \in S$, we have 
\begin{align*}
n_{p} \ll \frac{p\log{b}}{\log{p}} \quad \text{ and } \quad N_{p} \ll \frac{\log{b}}{\log{p}} \left(p + P\right).
\end{align*}
\end{lem}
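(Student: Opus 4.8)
The plan is to handle the two assertions in turn, bounding $n_p$ first and then feeding this estimate into a bound for $N_p$. For $n_p$ I would use only the elementary fact that if $p^{k}$ divides a positive integer $m$, then $p^{k}\leq m$, so $k\leq \frac{\log m}{\log p}$. When $p$ is odd this gives $v_{p}(b^{p-1}-1)\leq \frac{\log(b^{p-1}-1)}{\log p}<(p-1)\frac{\log b}{\log p}$, and since $(p-1)\frac{\log b}{\log p}\geq 1$ for $p\geq 3$ and $b\geq 2$, we obtain $n_{p}\ll \frac{p\log b}{\log p}$. When $p=2$ the same observation bounds $v_{2}(b-1)$ and $v_{2}(b+1)$ by $\frac{\log(b+1)}{\log 2}$, and together with the constant $3$ this again yields $n_{2}\ll \frac{\log b}{\log 2}=\frac{p\log b}{\log p}$.

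For $N_{p}$ the key observation is that the two terms $n_{p}$ and $-v_{p}(\ord(\overline{b},p^{n_{p}}))$ in \eqref{eqNp} do not depend on $q$, so
\[
N_{p}=n_{p}-v_{p}(\ord(\overline{b},p^{n_{p}}))+\max_{q\in S}v_{p}(\ord(\overline{b},q^{n_{q}})).
\]
I would then control each $v_{p}(\ord(\overline{b},q^{n_{q}}))$ by passing to the order of the ambient group: since $\gcd(b,q)=1$, Lagrange's theorem gives $\ord(\overline{b},q^{n_{q}})\mid \vphi(q^{n_{q}})$, hence $v_{p}(\ord(\overline{b},q^{n_{q}}))\leq v_{p}(\vphi(q^{n_{q}}))$. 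For $q\neq p$ the $p$-part of $\vphi(q^{n_{q}})=(q-1)q^{n_{q}-1}$ (or $2^{n_{2}-1}$ when $q=2$) is carried entirely by $q-1$, so $v_{p}(\ord(\overline{b},q^{n_{q}}))\leq v_{p}(q-1)\leq \frac{\log(q-1)}{\log p}<\frac{\log P}{\log p}$; for $q=p$ the relevant value is exactly $v_{p}(\ord(\overline{b},p^{n_{p}}))$. Thus $\max_{q\in S}v_{p}(\ord(\overline{b},q^{n_{q}}))\leq \max\{v_{p}(\ord(\overline{b},p^{n_{p}})),\frac{\log P}{\log p}\}$, and splitting according to which term attains this maximum, both cases give
\[
N_{p}\leq n_{p}+\frac{\log P}{\log p}.
\]

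Finally I would combine the two bounds. Using the first part, $n_{p}\ll \frac{p\log b}{\log p}$, and since $\frac{\log P}{P}\leq \frac{1}{e}\leq \log 2\leq \log b$ yields $\log P\leq P\log b$, the cross-term satisfies $\frac{\log P}{\log p}\leq \frac{P\log b}{\log p}$. Adding these produces $N_{p}\ll \frac{\log b}{\log p}\left(p+P\right)$, as claimed.

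I expect the only delicate point to be the uniform control of the cross-terms $v_{p}(\ord(\overline{b},q^{n_{q}}))$ for $q\neq p$. Everything reduces to the observation that the $p$-adic valuation of $\vphi(q^{n_{q}})$ is carried entirely by the factor $q-1$ once $q\neq p$, which is what keeps the contribution at size $\frac{\log P}{\log p}$ rather than something growing with the exponent $n_{q}$; without this the bound would degrade badly. The remaining estimates are routine comparisons of $p$-adic valuations against logarithms.
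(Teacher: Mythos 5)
Your proof is correct, and while it shares the paper's overall skeleton (the trivial inequality $v_{p}(x)\leq\frac{\log x}{\log p}$ for $n_{p}$, and Lagrange's theorem to control the cross terms in \eqref{eqNp}), your handling of the cross terms is genuinely sharper than the paper's. The paper bounds $v_{p}(\ord(\overline{b},q^{n_{q}}))$ by the logarithm of the full group order, $\frac{\log\left((q-1)q^{n_{q}-1}\right)}{\log p}\leq\frac{n_{q}\log q}{\log p}\ll\frac{q\log b}{\log p}$, so its cross term carries a factor $\log b$ and grows linearly in $q$; it also simply discards the term $-v_{p}(\ord(\overline{b},p^{n_{p}}))$ as nonpositive. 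You instead compute the $p$-adic valuation of $\varphi(q^{n_{q}})=(q-1)q^{n_{q}-1}$ exactly, observing that for $q\neq p$ it is carried entirely by $q-1$, hence is at most $\frac{\log P}{\log p}$ independently of $n_{q}$ and of $b$, and your case split at $q=p$ exploits the exact cancellation with $-v_{p}(\ord(\overline{b},p^{n_{p}}))$. This yields the strictly stronger intermediate estimate $N_{p}\leq n_{p}+\frac{\log P}{\log p}$, which you then relax to the stated form via $\log P\leq P\log b$. What each approach buys: yours isolates the real reason the cross terms stay small and would give a better bound if one cared about the dependence on $b$ and $P$ separately; the paper's cruder estimate is shorter, needs no distinction between $q=p$ and $q\neq p$, and loses nothing in the end, since the asymptotics in Theorem \ref{thm:P} are dominated by the $n_{p}$ contribution either way.
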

\begin{proof}
The bound of $n_{p}$ is deduced from its definition and the trivial inequality $$v_{p}(x) \leq \frac{\log{x}}{\log{p}} \text{ for any } x>0.$$

For any $q \in S$, note that $\ord(\overline{b},q^{n_{q}})$ cannot be bigger than the order of $(\Z/q^{n_{q}}\Z)^{\times}$, which equals $(q-1)q^{n_{q}-1}$, so
$$ v_{p}(\ord(\overline{b},q^{n_{q}})) \leq \frac{\log{(q-1)q^{n_{q}-1}}}{\log{p}} \leq \frac{n_{q}\log{q}}{\log{p}} \ll \frac{q \log{b}}{\log{p}}.$$
Then
$$N_{p} \leq n_{p} + \max_{q \in S} v_{p}(\ord(\overline{b},q^{n_{q}})) \ll  \frac{p\log{b}}{\log{p}} + \frac{P \log{b}}{\log{p}}.$$
\end{proof}

Now we are ready to prove Theorem \ref{thm:P}.
\begin{proof}[Proof of Theorem \ref{thm:P}]
Suppose $S$ is the set of all prime divisors of $d$ and the prime decomposition of $d$ is $d= \prod_{p \in S} p^{e_{p}}.$ By Theorem \ref{thm:dense} and the choice of $D$ in its proof (cf. \eqref{eqdefD}), we have $$\log{d}\leq \log D= - \log (2 \varepsilon) + \sum_{p \in S} N_{p} \log{p}.$$
So Lemma \ref{lem:bounds_Np} implies $$\log{(2\varepsilon d)} \ll \sum_{p \in S} ((p+P)\log{b}) \leq (\log{b}) \sum_{p \in S} 2P = 2P\#S\log{b},$$ where $\#S$ denotes the cardinality of $S$. The prime number theorem says $\#S \ll \frac{P}{\log{P}}$, hence we have 
\begin{align}
\log{(2\varepsilon d)} \ll \frac{P^2}{\log{P}} \log{b}. \label{eq:thm:P}
\end{align}

If $P>b$, then $\log \log{(2\varepsilon d)} \leq 2\log P$, and so
\begin{equation}\label{eqPgd}
\log{(2\varepsilon d)} \log \log{(2\varepsilon d)} \ll P^2 \log b.
\end{equation}  
If $P<b$, then it follows from (\ref{eq:thm:P}) and the trivial bound $\frac{1}{\log{P}} \ll 1$ that 
\begin{equation}\label{eqpld}
\log{(2\varepsilon d)} \ll P^2 \log{b}.
\end{equation}  
Now \eqref{eqPgd} and \eqref{eqpld} yield  the desired inequity \eqref{eqPdbound}.
\end{proof}

\section{Further discussion}
In this section, we present some corollaries of our main results, discuss the $T_{b}$-invariant condition, and raise questions for future research.

\subsection{Rational numbers of more general form}

In Theorem \ref{thm:finite}, we concerns rational numbers whose denominators do not share prime divisors with $b$. The following corollary says that if we restrict to rational numbers of the form $\frac{a}{d^n}$, then it is fine to have $\gcd(b,d)>1$, as long as $d$ has a prime divisor not dividing $b$.

\begin{cor}
Let $b\geq 2$ be an integer, $d \geq 2$ be another integer such that there exists at least one prime $p \mid d$ such that $p \nmid b$, and $A$ be a subset of $[0,1)$. If $A$ is not dense in $[0,1]$ and $T_{b}(A \cap \Q) \subset A$, then $A$ contains at most finitely many rational numbers of the form $\frac{a}{d^n}$, $n \in \N$. 
\end{cor}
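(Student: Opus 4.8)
The plan is to reduce the corollary to Theorem~\ref{thm:finite} by isolating the part of the denominator that is coprime to $b$. Write $d = d' d''$ where $d'$ collects the prime powers dividing $d$ whose primes do not divide $b$, and $d''$ collects the rest; by hypothesis $d' \geq 2$, and let $S$ be the (non-empty, finite) set of primes dividing $d'$, so that $S$ contains no prime divisor of $b$. The difficulty is that a rational of the form $\frac{a}{d^n}$ need not be an $S$-integer, since its reduced denominator may still carry the factor $(d'')^n$ whose primes divide $b$. The key observation is that applying $T_b$ repeatedly strips away these bad primes: after sufficiently many iterations the $d''$-part of the denominator is absorbed into the integer part and discarded modulo $1$.

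First I would make this precise. Given $\frac{a}{d^n}$ in $A\cap\Q$, consider its orbit under $T_b$. Since $T_b(x) = bx \bmod 1$ and every prime of $d''$ divides $b$, there is an integer $m$ (depending on $n$ and on $v_p(d'')$ for $p\mid d''$) such that $b^m$ kills the $d''$-part, i.e. $T_b^m\left(\frac{a}{d^n}\right)$ has reduced denominator dividing $(d')^n$. Concretely, $b^m \frac{a}{d^n} = \frac{a b^m}{(d')^n (d'')^n}$, and choosing $m$ large enough that $(d'')^n \mid b^m$ makes the denominator (after reduction) a divisor of $(d')^n$, hence supported only on primes in $S$. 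Thus $T_b^m\left(\frac{a}{d^n}\right)$ is an $S$-integer lying in $A$, because $T_b(A\cap\Q)\subseteq A$ gives $T_b^m(A\cap\Q)\subseteq A$.

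Next I would invoke Theorem~\ref{thm:finite}. The set $A$ is not dense and satisfies $T_b(A\cap\Q)\subseteq A$, so $A$ contains only finitely many $S$-integers; denote this finite set by $F$. The argument above shows that for every rational $\frac{a}{d^n}\in A$ there is some $m\geq 0$ with $T_b^m\left(\frac{a}{d^n}\right)\in F$. I must now deduce that only finitely many $\frac{a}{d^n}$ can arise this way. The point is that each element $y\in F$ has only finitely many $T_b$-preimages of the special form $\frac{a}{d^n}$: indeed if $T_b^m\left(\frac{a}{d^n}\right) = y$ then $\frac{a}{d^n} = \frac{y + j}{b^m}$ for some integer $0\le j < b^m$, and the denominator $d^n$ must divide a bounded quantity determined by $y$ and the structure of $b$. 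More carefully, writing $y = \frac{r}{s}$ in lowest terms with $s \mid (d')^{n_0}$ bounded (since $s$ divides the denominator of some element of the finite set $F$, the exponents are uniformly bounded), the equation $d^n \mid b^m s$ constrains the $d'$-part: since $\gcd(d', b)=1$, we get $(d')^n \mid s$, forcing $n$ to be bounded by the largest power of $d'$ appearing among denominators in $F$.

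The main obstacle is this last counting step: controlling how a given $S$-integer in $F$ can be reached as $T_b^m\left(\frac{a}{d^n}\right)$, and extracting from that a uniform bound on $n$. The clean way to handle it is to note that $T_b$ is finite-to-one on $\Q\cap[0,1)$ with preimages of $\frac{r}{s}$ all having denominator dividing $b s$; iterating, the $T_b^m$-preimages of $y$ all have denominators dividing $b^m s$, whose coprime-to-$b$ part still divides $s$. Since $\frac{a}{d^n}$ in reduced form has coprime-to-$b$ part equal to $(d')^n$, we conclude $(d')^n \mid s$, and as $s$ ranges over the finitely many denominators occurring in $F$ this bounds $n$; for each such bounded $n$ there are only finitely many $a$ with $\frac{a}{d^n}\in[0,1)$. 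Hence $A$ contains at most finitely many rationals of the form $\frac{a}{d^n}$, completing the proof.
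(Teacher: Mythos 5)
Your proof is correct and follows essentially the same route as the paper's: decompose $d$ into its prime-to-$b$ part $d'$ (the paper's $\tilde d$) and the remaining part $d''$, apply $T_b^m$ with $m$ large enough that $(d'')^n \mid b^m$ so the image is an $S$-integer still lying in $A$, invoke Theorem~\ref{thm:finite}, and bound $n$. Your final counting step (showing $(d')^n$ must divide one of the finitely many denominators occurring in $F$) merely spells out what the paper compresses into ``Theorem~\ref{thm:finite} says that $\tilde d^n$ is bounded''; note that both your argument and the paper's implicitly treat $\frac{a}{d^n}$ as a reduced fraction, since the step asserting that the coprime-to-$b$ part of its denominator is exactly $(d')^n$ requires $\gcd(a,d')=1$.
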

\begin{proof}
Let $\tilde{d}$ be the largest divisor of $d$ satisfying $\gcd(\tilde{d},b)=1$, that is, $\tilde{d} = d \prod_{p \mid b} p^{-v_{p}(d)}$. If $\frac{a}{d^n} \in A$, we apply $T_{b}$ to it enough times to make the denominator not containing prime divisors of $b$ anymore. In other words, we choose a big enough integer $m$ such that $m v_{p}(b) \geq n v_{p}(d)$ for all $p \mid b$, and then 
$$T_{b}^{m}\left(\frac{a}{d^n}\right) = \frac{\tilde{a}}{\tilde{d}^n},$$ for some integer $\tilde{a}$. Now Theorem \ref{thm:finite} says that  $\tilde{d}^n$ is bounded, and hence $n$ is bounded. Therefore $A$ contains at most finitely many rational numbers of the form $\frac{a}{d^n}$. 
\end{proof}

In Theorem \ref{thm:dense}, the same conclusion holds if we replace $\frac{a}{d}$ in \eqref{eqorbitTbad} by a rational multiple $\frac{aa'}{dd'}$, where $a'\leq d'\in\N$ with $(a',d')=1$. More precisely, we have the following.

\begin{cor}
Let $b \geq 2$ be an integer and $S$ be a non-empty finite set of primes not containing any prime divisor of $b$. Let $\frac{a'}{d'}$ be a rational number in $[0,1)$ with $(a',d')=1$. Then for any $\varepsilon>0$, there exists an effectively computable positive number $D$, such that for any $\frac{a}{d} \in \Z_{S}\cap [0,1)$ with $(aa',dd')=1$ and $d>D$, the orbit 
\begin{equation}
{\rm Orb}_{T_b}\left(\frac{aa'}{dd'}\right)=\left\{T_{b}^{i}\left(\frac{aa'}{dd'}\right) \colon i \geq 0 \right\},
\end{equation}
is $\varepsilon$-dense in $[0,1]$.
\end{cor}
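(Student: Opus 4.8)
The plan is to reduce the statement to the already-proved Theorem~\ref{thm:dense} by showing that the orbit of $\frac{aa'}{dd'}$ \emph{contains} an orbit of an $S$-integer of the shape handled there, up to the structure revealed in Theorem~\ref{thm:A_1A_2}. First I would clear the part of the denominator coming from $d'$. Since $\gcd(b,d)=1$ but $d'$ may share prime factors with $b$, write $d' = u v$ where $u$ collects the prime powers of $d'$ dividing some power of $b$ (i.e.\ $u=\prod_{p\mid b}p^{v_p(d')}$) and $v=d'/u$ is coprime to $b$. Applying $T_b$ enough times absorbs the $u$-part: choosing $m$ large with $m\,v_p(b)\ge v_p(d')$ for all $p\mid b$, one gets $T_b^m\bigl(\frac{aa'}{dd'}\bigr)=\frac{a''}{d\,v}$ for some integer $a''$, and since $\gcd(aa',dd')=1$ this fraction is in lowest terms with denominator $dv$ coprime to $b$. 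Because $\Orb_{T_b}\bigl(\frac{aa'}{dd'}\bigr)\supset\Orb_{T_b}\bigl(\frac{a''}{dv}\bigr)$, it suffices to show the latter orbit is $\varepsilon$-dense.

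Next I would note that $dv$ is divisible only by primes in $S\cup S'$, where $S'$ is the (fixed) set of primes dividing $v$, none of which divide $b$. So $\frac{a''}{dv}$ is an $S\cup S'$-integer in lowest terms. The point is that enlarging the prime set from $S$ to the fixed finite set $\tilde S:=S\cup S'$ changes nothing essential in the machinery: Lemma~\ref{lem:order_d} and Theorem~\ref{thm:A_1A_2} apply verbatim with $\tilde S$ in place of $S$. Writing $dv=\prod_{p\in\tilde S}p^{f_p}$ and taking the corresponding $N_p$ (defined by \eqref{eqNp} with $b$ unchanged), Theorem~\ref{thm:A_1A_2} exhibits $\Orb_{T_b}\bigl(\frac{a''}{dv}\bigr)$ as the set $A_2$, which contains the equally spaced points $\{\tfrac{1}{\delta_0}\cdot\tfrac{a''}{\delta_1}+\tfrac{j}{\delta_0}:0\le j\le \delta_0-1\}$ with gap $\tfrac1{\delta_0}$, where $\delta_0=\prod_{p\in\tilde S:\,f_p>N_p}p^{f_p-N_p}$. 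The same estimate as in the proof of Theorem~\ref{thm:dense} gives $\tfrac1{\delta_0}\le\bigl(\prod_{p\in\tilde S}p^{N_p}\bigr)/(dv)$, so once $dv>\tfrac1{2\varepsilon}\prod_{p\in\tilde S}p^{N_p}$ the orbit is $\varepsilon$-dense.

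Finally I would translate this threshold on $dv$ into a threshold on $d$, producing the promised $D$. Here $v=d'/u$ depends only on the fixed $\frac{a'}{d'}$ and on $b$, the set $\tilde S$ and all the $N_p$ depend only on the fixed data $b,S,S'$, and $v\ge 1$; hence it suffices to take
\begin{equation*}
D=\frac{1}{2\varepsilon}\prod_{p\in\tilde S}p^{N_p},
\end{equation*}
which is effectively computable from $b$, $S$, $\frac{a'}{d'}$ and $\varepsilon$. For $d>D$ we have $dv\ge d>D$, so the required inequality $dv>\tfrac1{2\varepsilon}\prod_{p\in\tilde S}p^{N_p}$ holds and the conclusion follows. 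The main obstacle is purely bookkeeping rather than conceptual: one must check that absorbing the $b$-part of $d'$ by iterating $T_b$ leaves a reduced fraction $\frac{a''}{dv}$ still coprime to $b$ (so that the order-based counting in Theorem~\ref{thm:A_1A_2} applies), and that passing from $S$ to the larger fixed set $\tilde S$ does not disturb the definitions of $n_p,N_p$ or the validity of Lemma~\ref{lem:order_d}; both hold because $b$ and the coprimality hypotheses are untouched by the enlargement.
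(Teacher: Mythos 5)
Your proof is correct and follows essentially the same route as the paper: absorb the part of $d'$ supported on primes dividing $b$ by iterating $T_b$, enlarge $S$ to the fixed finite set containing the remaining primes of $d'$, and apply the $\varepsilon$-density machinery to the resulting reduced fraction $\frac{a''}{dv}$, whose orbit is contained in ${\rm Orb}_{T_b}\bigl(\frac{aa'}{dd'}\bigr)$. The only cosmetic difference is that you re-run the proof of Theorem \ref{thm:dense} (via Theorem \ref{thm:A_1A_2} and the gap estimate) for the enlarged set $\tilde S$, whereas the paper simply invokes Theorem \ref{thm:dense} as a black box with $S$ replaced by $S'$.
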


\begin{proof}
Let 
\[S'=S\cup\{p: p\text{ is prime with }p\mid d', p\nmid b\}.\]
Then  there exists $k\geq 0$ such that for any $\frac{a}{d}\in \Z_{S}$, $T_b^k\left(\frac{aa'}{dd'}\right)$ is an $S'$-integer of the form $\frac{\tilde{a}}{d\tilde{d}}$. Then by Theorem \ref{thm:dense} (in which we take $S=S'$), we see that for any $\varepsilon>0$,  there exists a positive number $D$ such that for any $\frac{a}{d} \in \Z_{S}$ with $(aa',dd')=1$ and $d>D$, the orbit ${\rm Orb}_{T_b}\left(\frac{\tilde{a}}{d\tilde{d}}\right)$ is $\varepsilon$-dense in $[0,1]$. It then follows that ${\rm Orb}_{T_b}\left(\frac{aa'}{dd'}\right)$ is also $\varepsilon$-dense in $[0,1]$ as it contains ${\rm Orb}_{T_b}\left(\frac{\tilde{a}}{d\tilde{d}}\right)$.  
\end{proof}

\subsection{An application on \texorpdfstring{$S$-integers}{Sintegers}  }

Theorem \ref{thm:finite} yields the following property of $S$-integers, which says that the orbit of any infinite set of $S$-integers under $T_b$ is dense in $[0,1]$. 

\begin{cor}\label{corSinteger}
Let $b\geq 2$ be an integer, $S$ be a non-empty finite set
of primes not containing any prime divisor of $b$. Let $X\subset \mathbb{Z}_S\cap  [0,1)$ be an infinite subset of $S$-integers. Then the set 
\[{\rm Orb}_{T_b}(X):=\left\{b^kx \ (\bmod 1): x\in X, k\geq 0\right\}\]
is dense in $[0,1]$. 
\end{cor}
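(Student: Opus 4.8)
The plan is to prove Corollary \ref{corSinteger} by contradiction, reducing it to the finiteness statement of Theorem \ref{thm:finite}. Suppose that $\mathrm{Orb}_{T_b}(X)$ is \emph{not} dense in $[0,1]$. The natural object to feed into Theorem \ref{thm:finite} is the closure $A := \overline{\mathrm{Orb}_{T_b}(X)} \cap [0,1)$, or even just $A := \mathrm{Orb}_{T_b}(X)$ itself; the point is to exhibit a non-dense set, invariant in the required weak sense, that nonetheless contains infinitely many $S$-integers, contradicting the theorem.

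First I would verify the two hypotheses of Theorem \ref{thm:finite} for $A = \mathrm{Orb}_{T_b}(X)$. \textbf{Non-density:} by assumption $\mathrm{Orb}_{T_b}(X)$ is not dense in $[0,1]$, which is exactly the hypothesis we need (the theorem asks that $A$ be not dense in $[0,1]$). \textbf{Invariance:} I must check that $T_b(A \cap \Q) \subseteq A$. Since every element of $A$ lies in $\mathrm{Orb}_{T_b}(X)$, it has the form $T_b^k(x)$ for some $x \in X$ and $k \geq 0$; applying $T_b$ once more gives $T_b^{k+1}(x) \in \mathrm{Orb}_{T_b}(X) = A$. Thus $T_b(A) \subseteq A$, and in particular $T_b(A \cap \Q) \subseteq A$. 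So both hypotheses hold.

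The remaining step is to observe that $A$ contains infinitely many $S$-integers. Here I would use that $X \subseteq \Z_S \cap [0,1)$ is an infinite set of $S$-integers, and $X \subseteq \mathrm{Orb}_{T_b}(X) = A$ (taking $k=0$). Since $X$ is infinite and every element of $X$ is an $S$-integer, $A$ contains infinitely many $S$-integers. This directly contradicts the conclusion of Theorem \ref{thm:finite}, which asserts that any such $A$ contains at most finitely many $S$-integers. Hence the assumption that $\mathrm{Orb}_{T_b}(X)$ is not dense must fail, and the corollary follows.

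The one subtlety I would flag as the main thing to get right—though it is not a deep obstacle—is the bookkeeping around the two slightly different formulations of the orbit. In Theorem \ref{thm:dense} the orbit $\mathrm{Orb}_{T_b}(\frac{a}{d})$ is defined as $\{T_b^i(\frac{a}{d}) : i \geq 0\}$, whereas here $\mathrm{Orb}_{T_b}(X) = \{b^k x \bmod 1 : x \in X,\, k \geq 0\}$; these agree since $T_b^k(x) = b^k x \bmod 1$, so $\mathrm{Orb}_{T_b}(X) = \bigcup_{x \in X} \mathrm{Orb}_{T_b}(x)$ and the containment $X \subseteq \mathrm{Orb}_{T_b}(X)$ together with $T_b(\mathrm{Orb}_{T_b}(X)) \subseteq \mathrm{Orb}_{T_b}(X)$ are immediate. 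No quantitative input is needed: the corollary is a clean qualitative consequence of the finiteness theorem, obtained by contraposition.
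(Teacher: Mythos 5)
Your proof is correct and follows essentially the same route as the paper's own argument: both take $A = \mathrm{Orb}_{T_b}(X)$, note it is $T_b$-invariant and contains the infinite set $X$ of $S$-integers, and derive a contradiction with Theorem \ref{thm:finite} under the assumption of non-density. The bookkeeping you flag (identifying $b^k x \bmod 1$ with $T_b^k(x)$, so that $\mathrm{Orb}_{T_b}(X) = \bigcup_{k\geq 0} T_b^k X$) is exactly the observation the paper opens its proof with.
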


\begin{proof}
Observe that 
\[{\rm Orb}_{T_b}(X)=\bigcup_{k=0}^{\infty}T_b^kX\]
and thus ${\rm Orb}_{T_b}(X)$ is $T_b$-invariant.  If ${\rm Orb}_{T_b}(X)$ is not dense in $[0,1]$, then Theorem  \ref{thm:finite} implies that ${\rm Orb}_{T_b}(X)$ contains at most finitely many $S$-integers, which contradicts  that ${\rm Orb}_{T_b}(X)\supseteq X$ and $X$ is an infinite subset of $S$-integers. Hence ${\rm Orb}_{T_b}(X)$ is dense in $[0,1]$. 
\end{proof}

\subsection{The \texorpdfstring{$T_b$}{Tb}-invariant condition}
In Theorem \ref{thm:finite} and Theorem \ref{thm:P}, we require that $T_{b}(A \cap \Q) \subset A$, which is weaker than $A$ being $T_{b}$-invariant. If we know $A$ is actually $T_{b}$-invariant, then all of our results can apply to $S$-integers in $\overline{A}$, the closure of $A$, by noting that $A$ is $T_{b}$-invariant implies $\overline{A}$ is also $T_{b}$-invariant.

On one hand, clearly there exist  $T_{b}$-invariant sets which are  not generalized Cantor sets. On the other hand, Wu \cite{Wu19} showed that every closed $T_{b}$-invariant set can be covered by a generalized Cantor set with similar Hausdorff dimension (see \cite{Falconer90} for definition).

\begin{thm}\cite[Proposition 9.3]{Wu19}\label{thm:Wu}
Let $A \subset [0, 1)$ be a $T_{b}$-invariant set. Then for any $\epsilon > 0,$ there exist $k \in \N$ and a generalized Cantor set $C(b^k,\mathcal{D})$ such that $\overline{A} \subset C(b^k,\mathcal{D})$ and $\dim_{\rm H}(\overline{A}) \geq \dim_{\rm H}(C(b^k,\mathcal{D})) - \epsilon.$
\end{thm}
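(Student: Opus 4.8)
The statement belongs to symbolic dynamics, so the plan is to pass to the coding of $[0,1)$ by base-$b$ expansions and translate everything into a statement about a subshift. First I would identify each $x\in[0,1)$ with a digit sequence $(x_i)_{i\geq 1}\in\{0,1,\ldots,b-1\}^{\N}$ via $x=\sum_{i\geq1}x_ib^{-i}$, so that $T_b$ becomes the one-sided shift $\sigma$. Under this coding $\overline{A}$ corresponds to a closed $\sigma$-invariant set $X$ (a subshift), with the $T_b$-invariance of $A$ turning into $\sigma$-invariance of $X$. Let $N_k$ be the number of distinct length-$k$ words (factors) occurring in points of $X$. Since a length-$(k+l)$ factor splits into a length-$k$ prefix and a length-$l$ suffix, the sequence is submultiplicative, $N_{k+l}\leq N_kN_l$.

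Next I would build the covering Cantor set. For each $k$, let $\mathcal{D}_k\subseteq\{0,1,\ldots,b^k-1\}$ be the set of integers whose length-$k$ base-$b$ digit block is a factor of $X$, so $|\mathcal{D}_k|=N_k$. Given any $x\in\overline{A}$ with code $(x_i)\in X$, every window $x_{(j-1)k+1}\cdots x_{jk}$ obtained by cutting the expansion into consecutive blocks of length $k$ is a factor of $X$ (it occurs in the point $(x_i)$ itself), hence its base-$b^k$ value lies in $\mathcal{D}_k$. Therefore $\overline{A}\subseteq C(b^k,\mathcal{D}_k)$, and the standard dimension formula for a homogeneous Cantor set gives
$$\dim_{\rm H}C(b^k,\mathcal{D}_k)=\frac{\log|\mathcal{D}_k|}{\log b^k}=\frac{\log N_k}{k\log b}.$$

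To match dimensions, I would apply Fekete's subadditive lemma to $\log N_k$: the ratios $\frac{\log N_k}{k}$ decrease to their infimum, the topological entropy $h_{\rm top}(X)$, so $\dim_{\rm H}C(b^k,\mathcal{D}_k)\downarrow \frac{h_{\rm top}(X)}{\log b}$. The inclusion $\overline{A}\subseteq C(b^k,\mathcal{D}_k)$ already yields $\dim_{\rm H}\overline{A}\leq\frac{\log N_k}{k\log b}$ for every $k$, hence $\dim_{\rm H}\overline{A}\leq \frac{h_{\rm top}(X)}{\log b}$; the remaining content is the reverse bound $\dim_{\rm H}\overline{A}\geq \frac{h_{\rm top}(X)}{\log b}$. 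For this I would use the variational principle to select a $\sigma$-invariant Borel probability measure $\mu$ on $X$ whose entropy $h_\mu$ is as close as desired to $h_{\rm top}(X)$, apply the Shannon--McMillan--Breiman theorem so that for $\mu$-typical points the cylinder of generation $k$ has measure roughly $b^{-kh_\mu/\log b}$ while its diameter is of order $b^{-k}$, and then conclude via the mass distribution principle that $\dim_{\rm H}\overline{A}\geq \frac{h_\mu}{\log b}$; letting $h_\mu\to h_{\rm top}(X)$ finishes the lower bound. Choosing $k$ large enough that $\frac{\log N_k}{k\log b}<\frac{h_{\rm top}(X)}{\log b}+\varepsilon\leq\dim_{\rm H}\overline{A}+\varepsilon$ then gives the claimed inequality.

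The genuinely substantive step is this lower bound $\dim_{\rm H}\overline{A}\geq h_{\rm top}(X)/\log b$, i.e.\ the entropy--dimension correspondence, which is where measure theory enters through the variational principle and the mass distribution principle; a purely combinatorial argument is awkward here because a general subshift does not permit free concatenation of its factors. The only other delicate point is technical: the coding $x\mapsto(x_i)$ fails to be injective on the countable set of points with terminating expansions, so one must verify that this exceptional set is negligible for Hausdorff dimension and that $X$ is genuinely closed and $\sigma$-invariant.
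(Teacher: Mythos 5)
The paper offers no proof of this statement at all --- it is quoted directly from \cite{Wu19}, Proposition 9.3 --- and your argument is essentially the original one: pass to the coding subshift $X$, cover $\overline{A}$ by the generalized Cantor set built from the length-$k$ words of $X$, and choose $k$ large via Fekete's lemma applied to the submultiplicative word counts $N_k$. The only real difference is that Wu simply invokes Furstenberg's classical identity $\dim_{\rm H}\overline{A} = h_{\rm top}(X)/\log b$ for closed $T_b$-invariant sets, whereas you re-derive its nontrivial half (the lower bound) via the variational principle, Shannon--McMillan--Breiman, and the mass-distribution principle --- which is precisely the standard proof of that identity --- so your proposal is correct, modulo the coding caveats you already flag and the minor point that Fekete gives convergence of $\frac{\log N_k}{k}$ to its infimum rather than monotone decrease, which is all your argument actually needs.
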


This result, combining with Theorem \ref{thm:Shp}, leads to another proof of the finiteness of $S$-integers in $T_{b}$-invariant sets, which we now briefly sketch. If $A$ is $T_{b}$-invariant and not dense, then $\overline{A}$ is $T_b$-invariant and $\overline{A}\neq[0,1]$. From the proof of \cite[Proposition 9.3]{Wu19} we see that $\overline{A}$ is contained in a generalized Cantor set $C(b^k,\mathcal{D})$ with $\mathcal{D}\subseteq\{0,1,\ldots,b^k\}$ and $\#\mathcal{D}<b^k$. Then we can apply Theorem \ref{thm:Shp} to deduce that $\Z_{S} \cap C(b^k,\mathcal{D})$ is finite. Therefore the finiteness of $\Z_{S} \cap \overline{A}$ follows since $\Z_{S} \cap \overline{A} \subset \Z_{S} \cap C(b^k,\mathcal{D})$. 

\subsection{Algebraic numbers}
Now we have a decent understanding of rational numbers in $T_{b}$-invariant sets, one may wonder what happens for algebraic numbers. Given an algebraic number $\delta$ with degree at least $2$ and a $T_{b}$-invariant set $A \subset [0,1)$, we ask if the intersection $A \cap \{\frac{a}{\delta^n} \in (0,1) \colon a,n \in \N\}$ is finite. When $A=C$ is the middle-third Cantor set, Mahler \cite{Mahler84} conjectured that all algebraic numbers in $C$ are rational numbers, so $A \cap \{\frac{a}{\delta^n} \in (0,1) \colon a,n \in \N\}$ is actually the empty set. As solving Mahler's conjecture seems out of reach at the moment, determine the finiteness of $A \cap \{\frac{a}{\delta^n} \in (0,1) \colon a,n \in \N\}$ for arbitrary $\delta$ could also be very hard. We wonder if the problem becomes solvable if $\delta$ is restricted to Pisot numbers, which are very close to integers when raising to high powers.

{\noindent \bf  Acknowledgements}. The authors thank Ying Xiong for helpful comments and suggestions.    This research was partially supported by  NSFC 11671151 and 1187120.


\end{document}